\crefname{lem}{Lemma}{Lemmas}
\crefname{thm}{Theorem}{Theorems}
\crefname{cor}{Corollary}{Corollaries}
\crefname{prop}{Proposition}{Propositions}
\newcommand{\defn}[1]{\textcolor{Maroon}{\emph{#1}}}
\newcommand{\mathdefn}[1]{\textcolor{Maroon}{#1}}       
 \def\NAT@spacechar{~}
\newcommand*\bigcdot{\mathpalette\bigcdot@{.5}}
\newcommand*\bigcdot@[2]{\mathbin{\vcenter{\hbox{\scalebox{#2}{$\m@th#1\bullet$}}}}}
\newcommand{\boxprod}{\mathbin{\Box}}
\newcommand{\N}{\mathbb{N}}
\DeclarePairedDelimiter{\floor}{\lfloor}{\rfloor}
\DeclarePairedDelimiter{\ceil}{\lceil}{\rceil}
\renewcommand{\epsilon}{\varepsilon}
\renewcommand{\emptyset}{\varnothing}
\renewcommand{\ge}{\geqslant}
\renewcommand{\le}{\leqslant}
\renewcommand{\geq}{\geqslant}
\renewcommand{\leq}{\leqslant}
\DeclareMathOperator{\polylog}{polylog}
\DeclareMathOperator{\tw}{tw}
\DeclareMathOperator{\gm}{gm}
\newcommand{\GG}{\mathcal{G}}
 \renewcommand{\thefootnote}{\fnsymbol{footnote}}
\theoremstyle{plain}
\newtheorem{thm}{Theorem}
\newtheorem{lem}[thm]{Lemma}
\newtheorem{cor}[thm]{Corollary}
\newtheorem{obs}[thm]{Observation}
\crefname{obs}{Observation}{Observations}
\newtheorem*{lem*}{Lemma}
\theoremstyle{definition}
\newtheorem*{conj*}{Conjecture}
\let\oldsubset\subset
\newcommand{\subsetsim}{\mathrel{\ooalign{\raise0.175ex\hbox{$\oldsubset$}\cr\hidewidth\raise-0.9ex\hbox{\scalebox{0.9}{$\sim$}}\hidewidth\cr}}}
\begin{document}
\title{\MakeUppercase{Grid Minors and Products}}

\author{
Vida Dujmovi{\'c}\,\footnotemark[3]\qquad
Pat Morin\,\footnotemark[4]\qquad
David~R.~Wood\,\footnotemark[2]\qquad
David~Worley\footnotemark[5]
}

\maketitle

\begin{abstract}
  Motivated by recent developments regarding the product structure of planar graphs, we study relationships between treewidth, grid minors, and graph products.  We show that the Cartesian product of any two connected $n$-vertex graphs contains an $\Omega(\sqrt{n})\times\Omega(\sqrt{n})$ grid minor.  This result is tight: The lexicographic product (which includes the Cartesian product as a subgraph) of a star and any $n$-vertex tree has no $\omega(\sqrt{n})\times\omega(\sqrt{n})$ grid minor.
\end{abstract}

\footnotetext[3]{School of Computer Science and Electrical Engineering, University of Ottawa, Ottawa, Canada (\texttt{vida.dujmovic@uottawa.ca}). Research supported by NSERC.}

\footnotetext[4]{School of Computer Science, Carleton University, Ottawa, Canada (\texttt{morin@scs.carleton.ca}). Research supported by NSERC.}

\footnotetext[2]{School of Mathematics, Monash University, Melbourne, Australia (\texttt{david.wood@monash.edu}). Research supported by the Australian Research Council. }

\footnotetext[5]{School of Computer Science and Electrical Engineering, University of Ottawa, Ottawa, Canada (\texttt{dworl020@uottawa.ca}). Research supported by NSERC.}

 \renewcommand{\thefootnote}{\arabic{footnote}}

\section{Introduction}
\label{Intro}

Treewidth\footnote{A \defn{tree-decomposition} of a graph $G$ is a collection $(B_x :x\in V(T))$ of subsets of $V(G)$ (called \defn{bags}) indexed by the vertices of a tree $T$, such that (a) for every edge $uv\in E(G)$, some bag $B_x$ contains both $u$ and $v$, and (b) for every vertex $v\in V(G)$, the set $\{x\in V(T):v\in B_x\}$ induces a non-empty (connected) subtree of $T$. The \defn{width} of $(B_x:x\in V(T))$ is $\max\{|B_x| \colon x\in V(T)\}-1$. 
%A \defn{path-decomposition} is a tree-decomposition in which the underlying tree is a path, simply denoted by the corresponding sequence of bags $(B_1,\dots,B_n)$. 
    The \defn{treewidth} of a graph $G$, denoted by \defn{$\tw(G)$}, is the minimum width of a tree-decomposition of $G$.} is a ubiquitous parameter in structural graph theory measuring how close a graph is to a tree. It was first introduced as \textit{dimension} by \citet[pp.~37--38]{BB1972} in 1972, then rediscovered by \citet{Halin76} in 1976. The parameter was popularized when it was once again rediscovered by Robertson and Seymour~\cite{ROBERTSON198449} in 1984 and has since been at the forefront of structural graph theory research. Graphs of bounded treewidth are of particular interest due to the wide implications of their tree-like structure. For example, Courcelle's Theorem~\cite{Courcelle90} implies that many NP-Complete problems can be solved in linear time on graphs of bounded treewidth. 

Another key way to study the structure of graphs is through analysis of their minors.\footnote{A graph $G_1$ is a \defn{minor} of a graph $G_2$ if a graph isomorphic to $G_1$ can be obtained from a subgraph of $G_2$ through a sequence of edge contractions.} Finding a highly structured minor of a graph $G$ allows us to use properties of the minor to study $G$ itself. 
Our focus is on grid minors. 
%In particular, we are interested in studying the largest $k$ such that the $k\times k$ grid graph $\boxplus_k$ is a minor of $G$, where 
Let \defn{$\boxplus_k$} be the \defn{$k\times k$ grid}, which is the graph with vertex-set $\{1,\dots,k\}^2$ and edge-set $\{(x,y)(x',y'):|x-x'|+|y-y'|=1,\,x,y,x',y'\in \{1,\dots,k\}\}$.  
%\pat{This definition looks fishy: It says that $(5,1)$ is adjacent to $(1,4)$.  Replace the condition with $|x-x'|+|y-y'|=1$.}\worley{done.} \david{good spot}
%%%%%%%%
%\david{Cartesian product is not yet defined, and later we say $V(\boxplus_k)=\{1,\dots,k\}^2$. So I suggest we write here ``where $\boxplus_k$ is the graph with vertex-set $\{1,\dots,k\}^2$ and edge-set $\{(x,y)(x',y'):|x+y-x'-y'|=1,\,x,y,x',y'\in \{1,\dots,k\}\}$.'' This could go into a footnote if you prefer. If this edge-set definition is too brief, then write $\{(x,y)(x+1,y): x\in\{1,\dots,k-1\},y\in\{1,\dots,k\}\} \cup \{(x,y)(x,y+1): x\in\{1,\dots,k\},y\in\{1,\dots,k-1\}\}$.} \worley{I think the brief version is good, I have added it for now and will also discuss with Pat}. \david{I reworded slightly.} 
%%%%%%%%
For a graph $G$, let \defn{$\gm(G)$} be the maximum integer $k$ such that $\boxplus_k$ is a minor of $G$. 
It is folklore that $\tw(\boxplus_k)=k$; see~\citep{HW17} for a proof. 
This, combined with the fact that treewidth is minor-monotone, implies that for every graph $G$, 
\begin{equation}\label{tw_gte_gm}
    \tw(G) \geq \gm(G).
\end{equation}

%A similar structurally impactful parameter is the  Hadwiger Number. The Hadwiger Number of a graph $G$, denoted $h(G)$ is the largest integer $n$ is such that $G$ contains a $K_n$ minor. We refer interested readers to \cite{todo} \worley{Find a survey for this} for a survey on this area, as our work will focus on the aformentioned grid minors. 

A foundational result in this area is the \defn{Grid Minor Theorem} (also known as the \defn{Excluded Grid Theorem}) of Robertson and Seymour~\cite{RS-V}, which says there exists a function $f$ such that for every positive integer $k$, $f(k)$ is the minimum integer such that every graph with treewidth at least $f(k)$ contains a $k \times k$ grid minor.  Thus grid graphs are canonical examples of graphs with large treewidth. 

This result had widespread impact on structural graph theory research and led to further investigation into the best possible bounds for the function $f$. \citet{RS-V} proved the existence of $f(k)$, which they, along with Thomas, later showed to be in $2^{O(k^5)}$~\cite{RST94}. \citet{DJGT-JCTB99} showed that if $G$ has treewidth $\Omega(k^{4m^2(k+2)})$ where $k$ and $m$ are integers, then $G$ contains either $K_m$ or the $k \times k$ grid as a minor. \citet{LeafSeymour15} improved the upper bound to $f\in 2^{O(k \log k)}$. The first polynomial upper bound, stating that $f \in O(k^{98}\log k)$, was found by Chekuri and Chuzhoy~\cite{CC16}. Chuzhoy continued to work towards lowering this exponent, with the current state-of-the-art result by \citet{CT21} showing that $f \in O(k^9\log^{O(1)}k)$. A lower bound of $f\in \Omega(k^2\log k)$ was shown by Robertson~et~al.~\cite{RST94}, and \citet{DHK-Algo09} conjectured $f\in \Theta(k^3)$. 

For particular classes of graphs, much stronger Grid Minor Theorems are known. Say a class $\GG$ has the \defn{linear grid minor property} if, for some constant $c$,  every graph in $\mathcal{G}$ with treewidth at least $ck $ contains $\boxplus_k$ as a minor. For example, 
\citet{RST94} showed that every planar graph with treewidth at least $6k$ contains $\boxplus_k$ as a minor. Thus the class of planar graphs has the linear grid minor property. More generally, \citet{DemHaj-EuJC07} proved that every proper minor-closed class has the linear grid minor property. The proof used the Graph Minor Structure Theorem, which in turn depends on the Grid Minor Theorem. \citet{KK20} gave an alternative self-contained proof. In particular, they showed that for any graph $H$ there exists $c\leq |V(H)|^{O(|E(H)|)}$ such that 
every $H$-minor-free graph with treewidth at least $ck$ contains $\boxplus_k$ as minor. 
The linear grid minor property has been used to devise efficient polynomial time approximation schemes for many NP-hard problems on planar graphs and related graph families~\cite{DHK-Algo09,DFHT-JACM05,DH-Algo04, Eppstein-Algo00, FFLS18}. Note that the $\Omega(k^2\log k)$ lower bound mentioned above shows that general graphs do not have the linear grid minor property.

In this paper, we study grid minors in graph products\footnote{See \citep{EH20,HW23,WoodJGT13,EFM22,KOY14,CK06} for related work on the treewidth of graph products, and see \citep{BM98,Wood-NYJM11} for related work on complete graph minors in graph products.}.  This is motivated both by the fact that the grid graph itself is isomorphic to the Cartesian product of two paths, and also by recent developments in Graph Product Structure Theory. This area of research studies complex graph classes by modelling them as a product of simpler graphs and investigating the properties of these highly structured supergraphs. 
Before discussing Graph Product Structure Theory further, we first define the three types of graph products that we consider, each illustrated in \cref{Products}. Let $G_1$ and $G_2$ be graphs. 
%\worley{Updated all product definitions to unify indexing so that a vertex $(u_1, v_1)$ belongs to $G_1$ according to reviewer A request} \david{Are we consistent with this throughout the paper?} \worley{All indexing looks to be either consistent with this, or contextually clear to me} \david{okay} 
The \defn{Cartesian product} of  $G_1$ and $G_2$, denoted $G_1 \boxprod G_2$, is the graph with vertex set $V(G_1) \times V(G_2)$ where two distinct vertices $(u_1, u_2)$ and $(v_1, v_2)$ are adjacent if and only if: 
\begin{compactitem}
    \item $u_1 = v_1$ and $u_2v_2 \in E(G_2)$, or
    \item $u_2 = v_2$ and $u_1v_1 \in E(G_1)$.
\end{compactitem}
The \defn{strong product} of  $G_1$ and $G_2$, denoted $G_1 \boxtimes G_2$, is the graph with vertex set $V(G_1) \times V(G_2)$ where two distinct vertices $(u_1, u_2)$ and $(v_1, v_2)$ are adjacent if and only if: 
\begin{compactitem}
    \item $u_1 = v_1$ and $u_2v_2 \in E(G_2)$, 
    \item $u_2 = v_2$ and $u_1v_1 \in E(G_1)$, or
    \item $u_1v_1 \in E(G_1)$ and $u_2v_2 \in E(G_2)$.
\end{compactitem}
%\worley{One reviewer suggests using a thicker cdot for lexicographic product. Hows this look?} \david{looks fine}

The \defn{lexicographic product} of  $G_1$ and $G_2$, denoted $G_1 \bigcdot G_2$, is the graph with vertex set $V(G_1) \times V(G_2)$ where two distinct vertices $(u_1, u_2)$ and $(v_1, v_2)$ are adjacent if and only if:
\begin{compactitem}
    \item $u_1v_1 \in E(G_1)$, or
    \item $u_1 = v_1$ and $u_2v_2 \in E(G_2)$.
\end{compactitem}
It follows from the above definitions that 
$$G_1\boxprod G_2 \subseteq G_1\boxtimes G_2 \subseteq G_1 \bigcdot G_2.$$

% \begin{figure}[ht]
% \begin{center}
%   \includegraphics[scale=0.7]{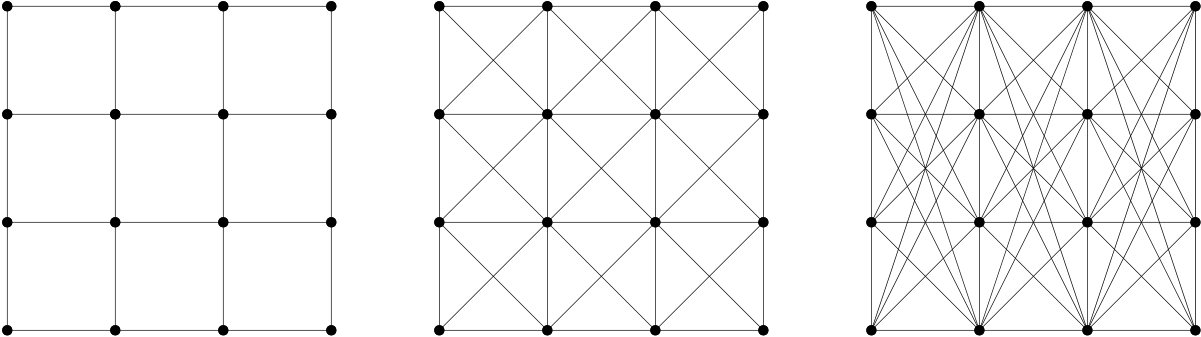}
% \end{center}
% \caption{From left to right: $P_4\boxprod P_4$, $P_4\boxtimes P_4$, and $P_4\cdot P_4$ .}
% \label{Products}
% \end{figure}

\begin{figure}[ht]
% \begin{center}
\begin{tblr}{colspec={X[c,m]X[c,m]X[c,m]X[c,m]X[c,m]X[c,m]}}
 \raisebox{.3\height}{\includegraphics[page=1]{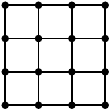}} &
 \raisebox{.3\height}{\includegraphics[page=2]{figs/P_4xP_4}} &
 \raisebox{.3\height}{\includegraphics[page=3]{figs/P_4xP_4}} &
 \includegraphics[page=1]{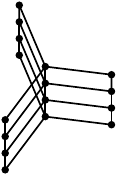} &
 \includegraphics[page=2]{figs/S_3xP_4} &
 \includegraphics[page=3]{figs/S_3xP_4} \\
 $P_4\boxprod P_4$ & $P_4\boxtimes P_4$ & $P_4\bigcdot P_4$ &
 $S_3\boxprod P_4$ & $S_3\boxtimes P_4$ & $S_3\bigcdot P_4$ 
\end{tblr}
% \end{center}
\caption{The products of two paths and of a star and a path.}
\label{Products}
\end{figure}

The starting point for recent developments in Graph Product Structure Theory is the \defn{Planar Graph Product Structure Theorem} of \citet{DJMMUW20}, which states that every planar graph $G$ is isomorphic to a subgraph of the strong
product of two very simple graphs, a graph $H$ of treewidth\footnote{This treewidth bound was improved to 6 by~\citet{UWY22}.} at most 8 and a path $P$, written as $G\subsetsim H\boxtimes P$.  Although $H\boxtimes P$ is a supergraph of the original graph $G$, it often shares or inherits properties of $G$, and its rigid structure makes it easier to work with. For example, 
 planar graphs with diameter $k$ have treewidth $O(k)$, and similarly any subgraph of $H\boxtimes P$  of diameter $k$ has treewidth $O(k)$.
% \referee{Page 4 top, "induced subgraph of $H\boxtimes P$ of diameter $k$ has treewidth $O(k)$", but this subgraph has no distance-preserving relation to a planar  subgraph of $H\boxtimes P$, and so this remark makes little sense here.} \david{I think Pat wrote this. I expect Pat meant that planar graphs with diameter $k$ also have treewidth $O(k)$. We should make this explicit. I suggest we write  ``For example,  planar graphs with diameter $k$ have treewidth $O(k)$, and similarly any subgraph of $H\boxtimes P$  of diameter $k$ has treewidth $O(k)$.''}\worley{Sounds good, I've included the change}. 
For a more global example, any $n$-vertex subgraph of $H\boxtimes P$ has a balanced separator of size $O(\sqrt{n})$ (see \cite[Lemma~6]{DJMMUW20} and \cite[Lemma~10]{DMW17}).  The proofs of both of these facts are considerably simpler than the same result for planar graphs. 

Product structure theorems have also been developed for other classes of graphs such as surface embeddable graphs~\citep{DJMMUW20,DHHW22}, graphs excluding an apex minor~\citep{DJMMUW20,ISW24,DHHJLMMRW}, graphs excluding any fixed minor~\citep{DJMMUW20,DEMWW22}, and various non-minor-closed classes~\citep{DMW23,HW24,DHSW24,BDHK24}. These product structure theorems have been key in resolving many long-standing open problems on queue layouts~\citep{DJMMUW20}, non-repetitive colourings~\citep{DEJWW20}, centred colourings~\citep{DFMS21,DHHJLMMRW}, adjacency labelling~\cite{EJM23,DEGJMM21}, twin-width~\cite{BKW,KPS24,JP22}, vertex ranking~\citep{BDJM}, and box dimension~\citep{DGLTU22}. This wide range of applications motivates the need for a deeper understanding of structural properties of graph products. 

\subsection{Our Results}
% \worley{Do we want to separate out footnote 5 as a claim?} \david{I would leave it as is}

It is known\footnote{Let $G_1$ and $G_2$ be connected graphs each with at least $n$ vertices. 
For $i\in\{1,2\}$, let $v_i$ be a leaf of a spanning tree of $G_i$, and let $G'_i:=G_i-v_i$, which is connected. For each $x\in V(G'_1)$, let $B_x$ be the subgraph of $G_1\boxprod G_2$ induced by $\{x\}\times V(G'_2)$. 
For each $y\in V(G'_2)$, let $B_y$ be the subgraph of $G_1\boxprod G_2$ induced by $V(G'_1)\times \{y\}$. 
Let $B_1$ be the subgraph of $G_1\boxprod G_2$ induced by $\{v_1\}\times V(G_2)$. 
Let $B_2$ be the subgraph of $G_1\boxprod G_2$ induced by $V(G'_1)\times \{v_2\}$. 
Let $\mathcal{B}:=\{ B_x\cup B_y: x\in V(G'_1),y\in V(G'_2)\}\cup\{B_1,B_2\}$. Then it is easily seen that $\mathcal{B}$ is a bramble in $G_1\boxprod G_2$ of order at least $n+1$. By the Treewidth Duality Theorem~\citep{ST93}, $\tw(G_1\boxprod G_2)\geq n$. This result was extended by \citet{WoodJGT13} who showed that 
for any two $k$-connected graphs $G$ and $H$ each with at least $n$ vertices, $\tw(G\boxprod H) \geq k(n-2k+2)-1$. } that for all $n$-vertex connected graphs $G_1$ and $G_2$, 
\begin{equation}
    \label{LowerBoundProduct}
    \tw(G_1 \boxprod G_2) \ge n.
\end{equation}
It thus makes sense for a grid minor theorem for graph products to be in terms of $n$. 
We show that this is in fact the case
by proving the following results:

\begin{enumerate}
   \item  For any two $n$-vertex connected graphs $G_1$ and $G_2$, 
   $$\gm(G_1\bigcdot G_2) \geq \gm(G_1\boxtimes G_2) \geq \gm(G_1\boxprod G_2) \in \Omega(\sqrt{n})\;\;\text{(see \cref{lower_bound})}.$$
      
   \item There exists two $n$-vertex connected graphs $G_1$ and $G_2$ (a star and any tree) such that   $$\gm(G_1\boxprod G_2) \leq \gm(G_1\boxtimes G_2) \leq \gm(G_1\bigcdot G_2) \in O(\sqrt{n}) \;\; \text{(see \cref{StarTreeUpperBound})}.$$
   
\end{enumerate}

The previous best bound for the product of two $n$-vertex connected graphs comes from combining \eqref{LowerBoundProduct} with the state-of-the-art Grid Minor Theorem  of Chuzhoy and Tan~\cite{CT21}, giving $\gm(G_1 \boxprod G_2) \in \Omega(n^{1/9}/\polylog(n))$. The first result above gives an excluded grid theorem for graph products that is stronger than what is possible for general graphs and much stronger than what can be proven for general graphs.

The second result shows that the first result is tight for the Cartesian, strong, and lexicographic product of two trees. A consequence of the second result and \eqref{LowerBoundProduct} is that there exists two trees whose Cartesian product has treewidth at least $n$ but whose largest grid minor has size $O(\sqrt{n})\times O(\sqrt{n})$. Thus, even these simple products do not have the linear (or even subquadratic) grid minor property. 

The remainder of this paper is organized as follows:  In \cref{A} we discuss background material. In \cref{B} we prove the above lower bound. In \cref{C} we prove the above upper bound. \cref{D} proves some exact bounds for the grid minor number of the products of stars and trees. Finally, \cref{E} concludes with directions for future work.

\section{Preliminaries}\label{A}

For any standard graph-theoretic terminology and notation not defined here, we use the same conventions used in the textbook by~\citet{D10}. In this paper, every graph $G$ is undirected and simple with vertex set $V(G)$ and edge set $E(G)$. The \defn{order} of $G$ is denoted $|G|:=|V(G)|$.  For two graphs $G_1$ and $G_2$ we use the notation $G_1\preceq G_2$ to indicate that $G_1$ is a minor of $G_2$.  We make use of the fact that the $\preceq$ relation is transitive. The following observation follows immediately from definitions.

\begin{obs}\label{minor_product}
  Let $G_1$, $G_2$, and $H$ be graphs.  If $G_1\preceq G_2$, then $G_1\boxprod H\preceq G_2\boxprod H$.
\end{obs}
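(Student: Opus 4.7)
The plan is to prove the observation by directly lifting a minor model of $G_1$ in $G_2$ to a minor model of $G_1 \boxprod H$ in $G_2 \boxprod H$.

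First, I would recall the standard definition: $G_1 \preceq G_2$ means there is a family of pairwise vertex-disjoint connected subgraphs $(B_v : v \in V(G_1))$ of $G_2$ (the \emph{branch sets}) such that for every edge $uv \in E(G_1)$ there exists an edge of $G_2$ with one endpoint in $B_u$ and the other in $B_v$. The goal is to exhibit an analogous family of branch sets in $G_2 \boxprod H$ indexed by $V(G_1) \times V(H) = V(G_1 \boxprod H)$.

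The natural choice is to set $B_{(v,h)} := B_v \times \{h\}$ for each $(v,h) \in V(G_1) \times V(H)$, viewed as the subgraph of $G_2 \boxprod H$ induced by these vertices. Three properties then need to be checked: (i) each $B_{(v,h)}$ is connected in $G_2 \boxprod H$; (ii) the sets $B_{(v,h)}$ are pairwise vertex-disjoint; and (iii) for every edge of $G_1 \boxprod H$, the two corresponding branch sets are joined by an edge of $G_2 \boxprod H$.

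Property (i) follows because $B_v$ is connected in $G_2$, and the $H$-coordinate is fixed at $h$, so the $G_2$-edges of the Cartesian product give an isomorphic copy of $B_v$ inside $B_{(v,h)}$. Property (ii) is immediate from the disjointness of the original branch sets and the distinctness of $H$-coordinates when $h \neq h'$. For property (iii), split into the two types of edges in the Cartesian product: an edge $(u,h)(v,h)$ of $G_1 \boxprod H$ with $uv \in E(G_1)$ lifts to an edge between $B_u \times \{h\}$ and $B_v \times \{h\}$ coming from the assumed $G_2$-edge between $B_u$ and $B_v$; an edge $(v,h)(v,h')$ with $hh' \in E(H)$ is witnessed by picking any $x \in B_v$ and using the product edge $(x,h)(x,h')$ between $B_{(v,h)}$ and $B_{(v,h')}$. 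None of these steps involve any real obstacle; the only thing to be careful about is writing down the two edge cases cleanly so that both the ``$G_1$-direction'' and the ``$H$-direction'' of the product are accounted for.
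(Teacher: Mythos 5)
Your proof is correct, and it is exactly the straightforward lifting argument that the paper has in mind when it says the observation ``follows immediately from definitions'' (the paper gives no explicit proof). The branch sets $B_{(v,h)} := B_v \times \{h\}$, together with your verification of connectedness, disjointness, and the two edge types of the Cartesian product, form a complete and correct minor model of $G_1 \boxprod H$ in $G_2 \boxprod H$.
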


A \defn{model} of a graph $H$ in a graph $G$ is a set $\mathcal{M}:=\{B_x\subseteq V(G): x\in V(H)\}$ of subsets of $V(G)$, called \defn{branch sets}, indexed by the vertices of $H$ and such that:
\begin{compactenum}[(i)]
  \item for each distinct pair $x,y\in V(H)$, $B_x\cap B_y=\emptyset$;
  \item for each $x\in V(H)$, $G[B_x]$ is connected and
  \item for each $xy\in E(H)$ there exists an edge $vw\in E(G)$ with $v\in B_x$ and $w\in B_y$.
\end{compactenum}
It follows from definitions that $H\preceq G$ if and only if there exists a model of $H$ in $G$.

For each $n\in \N$, let $S_n$ denote the \defn{$n$-star}; the rooted tree with $n$ leaves, each of which is adjacent to the root.  For each $\ell,p\in\N$, let $S_{\ell,p}$ denote the star with $\ell$ leaves whose edges have been subdivided $p-1$ times.  More formally, $V(S_{\ell,p}):=\{v_0\}\cup\{v_{i,j}:(i,j)\in[\ell]\times[p]\}$ and $E(S_{\ell,p}):=\{v_0v_{i,1}:i\in[\ell]\}\cup \{v_{i,j}v_{i,j+1}:(i,j)\in[\ell]\times[p-1]\}$, where $[\ell]=\{1,\ldots,\ell\}$.  We call $S_{\ell,p}$ a \defn{subdivided star}.  Subdivided stars generalize both stars and paths: The $n$-vertex path $P_n$ is isomorphic to $S_{1,n-1}$ and the $n$-leaf star $S_n$ is isomorphic to $S_{n,1}$. 

\begin{lem}\label{anything_times_star}
  For any positive integer $n$ and any $n$-vertex connected graph $G$, $K_{n} \preceq G\boxprod S_n$.
\end{lem}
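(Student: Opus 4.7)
The plan is to construct an explicit model of $K_n$ in $G \boxprod S_n$ with one branch set per vertex of $G$. Write $V(G) = \{u_1, \ldots, u_n\}$ and let $c$ denote the centre of $S_n$ and $\ell_1, \ldots, \ell_n$ its leaves. For each $i \in [n]$ I would define
\[ B_i := \{(u_i, c)\} \cup (V(G) \times \{\ell_i\}), \]
so $B_i$ consists of the entire ``$\ell_i$-layer'' of $G \boxprod S_n$ together with the single extra centre-layer vertex $(u_i, c)$. (As a sanity check, the $B_i$ partition $V(G \boxprod S_n)$.)

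Then I would verify the three model conditions. For \emph{disjointness}, the sets $V(G) \times \{\ell_i\}$ use pairwise distinct second coordinates, the singletons $\{(u_i, c)\}$ use pairwise distinct first coordinates, and no centre-layer vertex can lie in any leaf layer. For \emph{connectedness of each $B_i$}, the subgraph of $G \boxprod S_n$ induced by $V(G) \times \{\ell_i\}$ is a copy of $G$ and hence connected by hypothesis, while the extra vertex $(u_i, c)$ is joined to $(u_i, \ell_i)$ by the edge arising from $c \ell_i \in E(S_n)$. For \emph{pairwise adjacency}, given distinct $i, j \in [n]$ the vertex $(u_j, \ell_i) \in B_i$ is adjacent in $G \boxprod S_n$ to $(u_j, c) \in B_j$, because they agree in the first coordinate and $c \ell_i \in E(S_n)$. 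Together these three properties give a model of $K_n$ in $G \boxprod S_n$, proving the lemma.

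There is no genuine obstacle here; the argument is essentially a calculation. The construction exploits two features of the star: the centre is adjacent to every leaf (so for each $j$ the single centre-layer vertex $(u_j, c) \in B_j$ simultaneously witnesses adjacency with every other $B_i$ through the leaf $\ell_i$), and there are exactly $n$ leaves (so the $n$ branch sets can each claim their own leaf layer, keeping them disjoint). The hypothesis that $G$ is connected is used only to guarantee that each leaf layer of $G \boxprod S_n$ is a connected subgraph.
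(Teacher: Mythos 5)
Your construction is exactly the paper's: you take each branch set to be one full leaf-layer $V(G)\times\{\ell_i\}$ together with the single centre-layer vertex $(u_i,c)$, and verify disjointness, connectedness, and pairwise adjacency in the same way. The proposal is correct and matches the paper's proof up to notation.
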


Note that this lemma is implied by \citep[Lemma~5.1]{Wood-NYJM11}; we include the proof here for the sake of completeness.
\begin{proof}
  Let $y_0$ denote the root of $S_n$, let $y_1,\ldots,y_n$ denote the leaves of $S_n$.  Let $V(K_{n})=\{1,\dots,n\}$ and let $v_1,\ldots,v_n$ denote the vertices of $G$.  
  We now construct a model $\mathcal{M}:=\{B_x:x\in V(K_n)\}$ of $K_n$ in $G\boxprod S_n$.  For each $i\in V(K_n)$, define the branch set
  \[
     B_i:=\{(v,y_i):v\in V(G)\} \cup \{ (v_{i},y_0) \} \enspace .
  \]
  We now show that $\mathcal{M}$ is a model of $K_n$ in $G\boxprod S_n$.
  The induced graph $(G\boxprod S_n)[B_i]$ is connected because $(G\boxprod S_n)[\{(v,y_i):v\in V(G)\}]$ 
  is isomorphic to $G$ and $(v_{i},y_0)$ is adjacent to $(v_{i},y_i)$.
  For any $1\le i< j\le n$, $B_i$ and $B_j$ are disjoint because $y_i\neq y_j$ and $v_i\neq v_j$.  Furthermore, the vertex $(v_{i},y_0)\in B_i$ is adjacent to $(v_i,y_j)\in B_j$.  Therefore, there is an edge in $G\boxprod S_n$ with one endpoint in $B_i$ and one endpoint of $B_j$ for each $1\le i < j\le n$.
\end{proof}

Note that, for any tree $T$ with $n$ leaves and at least one non-leaf vertex, $S_n\preceq T$.  In this case, \cref{anything_times_star,minor_product} imply that $K_n\preceq G\boxprod T$.

Finally, we mention the following upper bound on the treewidth of $G_1\bigcdot G_2$. This result is stated in \citep{HW23} for $G_1\boxtimes G_2$ and is implicit in earlier works; we include the easy proof for completeness.

\begin{lem}
\label{EasyUpperBound}
For any graphs $G_1$ and $G_2$,
$$\tw(G_1\boxprod G_2)\leq\tw(G_1\boxtimes G_2)\leq \tw(G_1\bigcdot G_2)\leq (\tw(G_1)+1)|V(G_2)|-1.$$
\end{lem}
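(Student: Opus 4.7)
The first two inequalities are immediate: the paper has already noted the containments $G_1\boxprod G_2\subseteq G_1\boxtimes G_2\subseteq G_1\cdot G_2$, and treewidth is monotone under taking subgraphs (any tree-decomposition of a graph restricts to a tree-decomposition, of no greater width, of any of its subgraphs). So the real content is the third inequality $\tw(G_1\cdot G_2)\leq(\tw(G_1)+1)|V(G_2)|-1$.

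The plan for the third inequality is a standard ``blow up each bag by $V(G_2)$'' construction. Start with any tree-decomposition $(B_x:x\in V(T))$ of $G_1$ of width $\tw(G_1)$, and define a new decomposition $(B'_x:x\in V(T))$ of $G_1\cdot G_2$ by setting
\[
  B'_x := B_x\times V(G_2).
\]
Each new bag has size exactly $|B_x|\cdot|V(G_2)|$, so the width is at most $(\tw(G_1)+1)|V(G_2)|-1$, which is the desired bound.

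It remains to check that $(B'_x:x\in V(T))$ really is a tree-decomposition of $G_1\cdot G_2$. For the subtree condition, for any $(u,v)\in V(G_1)\times V(G_2)$ the set $\{x\in V(T):(u,v)\in B'_x\}$ equals $\{x\in V(T):u\in B_x\}$, which is a non-empty connected subtree of $T$ by hypothesis. For the edge condition, let $(u_1,v_1)(u_2,v_2)$ be any edge of $G_1\cdot G_2$. By the definition of the lexicographic product either (i) $u_1u_2\in E(G_1)$, in which case some $B_x$ contains both $u_1$ and $u_2$, and hence $B'_x$ contains both endpoints; or (ii) $u_1=u_2$ and $v_1v_2\in E(G_2)$, in which case any bag $B_x$ containing $u_1$ gives a bag $B'_x$ containing both endpoints. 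So both tree-decomposition axioms hold.

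There is no real obstacle here; the only thing worth flagging is that one must handle the two edge types of the lexicographic product separately, but both are immediate from the corresponding property of the original decomposition of $G_1$. The same construction simultaneously certifies the bound for $G_1\boxprod G_2$ and $G_1\boxtimes G_2$ via the subgraph containments above.
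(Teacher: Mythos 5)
Your proof is correct and takes essentially the same approach as the paper: both inherit the first two inequalities from the subgraph containments, and both prove the third by lifting a tree-decomposition of $G_1$ to one of $G_1\cdot G_2$ via $B'_x:=B_x\times V(G_2)$. You simply spell out the verification of the two tree-decomposition axioms, which the paper leaves implicit.
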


\begin{proof}
The first two inequalities hold since $G_1\boxprod G_2 \subseteq G_1\boxtimes G_2 \subseteq G_1\bigcdot G_2$. For the final inequality, start with a tree-decomposition $(B_x:x\in V(T))$ of $G_1$ with bags of size at most $\tw(G_1)+1$. For each $x\in V(T)$ let $B'_x:=\{(v,w):v\in B_x,w\in V(G_2)\}$. Observe that $(B'_x:x\in V(T))$ is a tree-decomposition of $G_1\bigcdot G_2$, and $|B'_x|\leq |B_x|\,|V(G_2)| \leq (\tw(G_1)+1)|V(G_2)|$ for each $x\in V(T)$. The result follows. 
\end{proof}

\cref{LowerBoundProduct,EasyUpperBound} imply that for any trees $T_1$ and $T_2$,
\begin{equation}
\label{TreewidthProductTrees}
\min\{|V(T_1)|,|V(T_2)|\} \leq
\tw(T_1\boxprod T_2)\leq
\tw(T_1\boxtimes T_2)\leq 
\tw(T_1\bigcdot T_2)\leq 
2\min\{|V(T_1)|,|V(T_2)|\}-1.
\end{equation}
Thus the treewidth of the Cartesian, strong or lexicographic product of two trees is determined (up to a factor of 2) by the number of vertices in the two trees. The remainder of the paper shows that determining the largest grid minor in such a product is more nuanced. 
% \worley{I like the addition} \david{done}

\section{The Lower Bound}\label{B}

% Our lower bound, which shows that, for any two $n$-vertex connected graphs $G_1$ and $G_2$, $\gm(G_1\boxprod G_2)\in\Omega(\sqrt{n})$ is obtained as follows:  We first show that each of $G_1$ and $G_2$ contains a `large' subdivided star.  Then we show that the product of any two `large' subdivided stars $S_{\ell_1, p_1}$ and $S_{\ell_2, p_2}$ contains a large grid-minor.  The quotes on `large' here are due to the fact that we measure the size of a subdivided star in a non-standard way, as in the following lemma.

\subsection{Connected Graphs Contain Large Subdivided Stars}

We first state some terminology that will be relevant in the following results. The \defn{length} of a path $v_0,\ldots,v_r$ is the number, $r$, of edges in the path. The \defn{depth} of a vertex $v$ in a rooted tree $T$ is the length of the path, in $T$, from $v$ to the root of $T$. A path $P$ in a rooted tree $T$ is \defn{vertical} if, for each $i\in\N$, $V(P)$ contains at most one vertex of depth $i$. The vertex of minimum depth in a vertical path is its \defn{upper endpoint}, and the vertex of maximum depth in a vertical path is its \defn{lower endpoint}. A vertex $v$ is a \defn{$T$-ancestor} of a vertex $w$ if the vertical path from $w$ to the root of $T$ contains $v$.  Two vertices of $T$ are \defn{unrelated} if neither is a $T$-ancestor of the other, otherwise they are \defn{related}.  A pair of paths $P_1$ and $P_2$ in $T$ is \defn{completely unrelated} if $v$ and $w$ are unrelated, for each $v\in V(P_1)$ and each $w\in V(P_2)$.  We say that $P_1$ and $P_2$ are \defn{completely related} if $v$ and $w$ are related, for each $v\in V(P_1)$ and each $w\in V(P_2)$. The \defn{height} $h_T(v)$ of a vertex $v$ in $T$ is the maximum order of a vertical path in $T$ whose upper endpoint is $v$.  For each $i\in\N$, let $\mathdefn{H_i(T)}:=\{v\in V(T):h_T(v)=i\}$ and $\mathdefn{n_i(T)}:=|H_i(T)|$.   We have the following observation:

\begin{obs}\label{same_height_unrelated}
  For any rooted tree $T$ and any $i\in\N$, $T$ contains a set of $n_i(T)$ pairwise completely unrelated vertical paths, each of order $i$.  As a consequence, $S_{n_i(T),i}\preceq T$ for each $i\in\N$. 
\end{obs}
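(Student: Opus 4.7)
The plan is to combine both parts of the observation into one construction: first pick the paths, then upgrade them to a model of $S_{n_i(T),i}$. For each $v\in H_i(T)$, the definition $h_T(v)=i$ produces some vertical path of order $i$ with upper endpoint $v$; fix one such path and call it $P_v$. The first assertion follows once I show that the family $\{P_v : v\in H_i(T)\}$ is pairwise completely unrelated.

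The heart of the argument is a height-monotonicity step: any two distinct $u,v\in H_i(T)$ are $T$-unrelated. Indeed, if $u$ were a proper $T$-ancestor of $v$, then prepending the vertical $T$-path from $u$ to $v$ (of order at least $2$) to $P_v$ (of order $i$) produces a vertical path with upper endpoint $u$ of order strictly greater than $i$, contradicting $h_T(u)=i$. Complete unrelatedness of $P_u$ and $P_v$ then follows automatically: every vertex of $P_v$ is a $T$-descendant of $v$ (or is $v$ itself), and the descendant-closures of two unrelated vertices are disjoint subtrees in which no vertex of one is an ancestor of any vertex of the other.

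For the second assertion I would exhibit a model of $S_{n_i(T),i}$ whose $n_i(T)\cdot i$ leg branch sets are the singleton vertices of the paths $P_v$ (with the upper endpoint $v$ playing the role of $v_{i,1}$, the neighbour of the centre), and whose central branch set is $C := V(S)\setminus H_i(T)$, where $S$ is the Steiner subtree of $H_i(T)$ in $T$. Because the vertices of $H_i(T)$ are pairwise unrelated, they are exactly the leaves of $S$, so $C$ is the subtree of internal vertices of $S$ and is therefore connected. Moreover $C$ is disjoint from every $P_v$ (it consists of proper $T$-ancestors of $H_i(T)$, whereas $P_v$ lies in the descendant-closure of $v$), and each $v\in H_i(T)$ has a neighbour in $C$, namely the next vertex along the $T$-path from $v$ towards the least common ancestor of $H_i(T)$, supplying the required edges between the centre and each leg.

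The main obstacle I foresee is the degenerate case $|H_i(T)|=1$, in which $S$ is a single vertex and $C$ is empty. Here I would take the central branch set to be any neighbour of the unique $v\in H_i(T)$ that lies outside $P_v$, namely the parent of $v$ when $v$ is not the root, or a child of $v$ off $P_v$ otherwise; one sanity-checks that this covers all nontrivial cases. Once the height-based separation in the second paragraph is established, the Steiner-tree construction delivers both parts of the observation in a single stroke.
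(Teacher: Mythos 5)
Your proof is correct and follows the same overall shape as the paper's: pick one height-$i$ downward path $P_v$ per $v\in H_i(T)$, argue pairwise complete unrelatedness, then attach the paths to a single connected centre. Two points of divergence are worth noting. First, where the paper simply \emph{asserts} that $H_i(T)$ is pairwise unrelated and that unrelatedness of upper endpoints implies complete unrelatedness of the paths, you actually prove both facts (height monotonicity for the first, disjointness of descendant-closures for the second); this fills a gap the paper leaves to the reader. Second, the centre is built differently: the paper contracts every edge ``that has both endpoints of depth less than~$i$'' into a vertex $x$ and then discards everything outside $\{x\}\cup\bigcup_j V(P_j)$ — a condition that, read literally, does not do the job, since the $v_j$'s can sit at arbitrary depths; what is wanted is the (connected, upward-closed) set of vertices of height greater than~$i$. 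Your Steiner-subtree interior plays exactly the role of that set and sidesteps the depth/height confusion, at a modest cost in length. Both arguments share an unhandled boundary case: if $n_i(T)=1$ and the unique vertex $v\in H_i(T)$ is the root with a single child, then the set $\{u:h_T(u)>i\}$ and your centre $C$ are both empty, and your fallback (``a child of $v$ off $P_v$'') need not exist; indeed, for $T=P_i$ rooted at an endpoint the claim $S_{1,i}=P_{i+1}\preceq T$ is simply false. You were right to flag this case, though the patch you sketch does not close it; the paper does not mention it at all. This is harmless for the paper because the observation is only invoked there with $n_i(T)$ large, but it is worth being aware that the statement as written fails on that degenerate input.
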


\begin{proof}
  Let $v_1,\ldots,v_{n_i(T)}:= H_i(T)$ and observe that $v_1,\ldots,v_{n_i(T)}$ are pairwise unrelated.  For each $j\in\{1,\ldots,n_i(T)\}$, let $P_j$ be a path of order $i$ that has $v_j$ as an upper endpoint. (Such a path exists by the definition of $H_i(T)$.)  Observe that, for distinct $j$ and $k$, $P_j$ and $P_k$ are vertex-disjoint, and completely unrelated since $v_j$ and $v_k$ are unrelated. By contracting each edge that has both endpoints of depth less than $i$ into a single vertex $x$ and removing all vertices not in $\{x\}\cup\bigcup_{j\in\{1,\ldots,n_i(T)\}} V(P_j)$ we obtain $S_{n_i(T),i}$. Thus $S_{n_i(T),i}\preceq T$. 
\end{proof}

We will show that the product $G_1\boxprod G_2$ of two connected $n$-vertex graphs $G_1$ and $G_2$ contains an $\Omega(\sqrt{n})\times\Omega(\sqrt{n})$ grid minor by studying the product $T_1\boxprod T_2$ of two spanning trees of $G_1$ and $G_2$, respectively. \Cref{anything_times_star} allow us to dispense with the case when $n_i(T_b)\in\Omega(n)$ for some $i$ and some $b\in\{1,2\}$ since, if $n_i(T_b)\in\Omega(n)$, then $T_b$ contains a $S_{\Omega(n)}$-minor, so \cref{anything_times_star} implies $K_{\Omega(n)}\preceq T_1\boxprod T_2$, so $\boxplus_{\Omega(\sqrt{n})}\preceq T_1\boxprod T_2$.
% \david{I find the previous sentence strange, since 
% \cref{anything_times_star} (without \cref{same_height_unrelated}) implies the following stronger statement: if some $T_i$ has $p\in \Omega(n)$ non-root leaves, then $T_i$ has a star with $p$ leaves as a minor, so $K_p$ is a minor of $G_1\boxprod G_2$ by 
% \cref{anything_times_star}, implying $\boxplus_{\sqrt{p}}$ is a minor of $G_1\boxprod G_2$, implying $\gm(G_1\boxprod G_2)\in \Omega(\sqrt{n})$, and we are done.}. 
The following lemma will be helpful when this is not possible.  (In several places, including the following lemma, we make use of Euler's solution~\citep{EulerBasel} to the Basel Problem: $\sum_{i=1}^\infty 1/i^2 = \pi^2/6$.)

\begin{lem}\label{disjoint_p_paths}
  Let $T$ be a rooted tree with $n\ge 1$ vertices, and let $p\geq 1$ be an integer such that $n_i(T) \leq \tfrac{3}{2}n/(\pi i)^2$ for each $i\in\{1,\ldots,p-1\}$.  Then $T$ contains pairwise vertex-disjoint vertical paths $P_1,\ldots,P_{\ceil{n/4p}}$, each of order $p$ such that, for each $i\neq j$, $P_i$ and $P_j$ are either completely unrelated or completely related.
\end{lem}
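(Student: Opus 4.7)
The plan has two parts. First, use the hypothesis together with Euler's solution to the Basel problem to show that the set $U := \{v \in V(T) : h_T(v)\ge p\}$ is large. Namely,
\[
|V(T)\setminus U| \;=\; \sum_{i=1}^{p-1} n_i(T) \;\le\; \frac{3n}{2\pi^2}\sum_{i=1}^{\infty}\frac{1}{i^2} \;=\; \frac{3n}{2\pi^2}\cdot\frac{\pi^2}{6} \;=\; \frac{n}{4},
\]
so $|U| \ge 3n/4$. Every vertex in $U$ is a legitimate upper endpoint of a vertical path of order $p$ in $T$, so $U$ will serve as the reservoir of tops for the required family of paths.

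Second, I would build the paths by a bottom-up greedy procedure on $T$. Process its vertices in post-order, and at each vertex $v$ maintain (i) the length $g(v)$ of the longest vertical descending path from $v$ through vertices not yet claimed, and (ii) the lowest vertex $u^*(v)$ in $T$ that is an ancestor of every upper endpoint of a previously-claimed path lying in the subtree of $v$ (undefined if there is none). Whenever $g(v)\ge p$ and either $u^*(v)$ is undefined or $d(u^*(v))\ge d(v)+p-1$ (strictly if $u^*(v)$ is itself a claimed top), claim a new path $P_v$ along the first $p$ active vertices of the vertical line from $v$ directed towards $u^*(v)$, and mark these $p$ vertices as claimed. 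The laminar property then follows by cases. Two claimed paths with $T$-incomparable upper endpoints lie in disjoint subtrees and so are completely unrelated. Two claimed paths whose upper endpoints are comparable, say $v$ is a strict ancestor of $w$, were produced with $w$ claimed first (post-order), and the directional constraint forces the lower endpoint of $P_v$ to be a strict ancestor of $w$ and of every other top below $v$; therefore every vertex of $P_v$ is an ancestor of every vertex of $P_w$, and the two paths are completely related.

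The main obstacle is the count. If $k$ paths are claimed then the conclusion we want is $k \ge \lceil n/(4p)\rceil$, equivalently $kp \ge n/4$, i.e., at most $3n/4$ vertices of $T$ are unclaimed. Step 1 accounts for at most $n/4$ vertices outside $U$, so the remaining task is to bound the number of vertices of $U$ that the algorithm leaves unclaimed. I propose a charging argument: a vertex $v\in U$ that is left unclaimed is blocked either because $g(v)<p$ (a claimed vertex sits within $p-1$ descending steps of $v$ on every sufficiently long line) or because the LCA constraint fails (previously claimed tops lie in two distinct subtrees of a vertex of depth less than $d(v)+p-1$ below $v$). In both cases a claimed vertex lies at depth at most $d(v)+p-1$ below $v$, which we charge as $v$'s blocker. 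Since a fixed claimed vertex has only $p-1$ strict ancestors within the relevant depth range, each claimed vertex is blamed by $O(p)$ unclaimed $U$-vertices, bounding the unclaimed part of $U$.

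The hard part will be tightening this charging so that the right constant $1/(4p)$ appears rather than a weaker $1/(4p^2)$ that the naive bound yields. To save the factor of $p$ I expect one has to separate the two failure types carefully: in the LCA case the witness vertex has small height in $T$ and its count is controlled by the hypothesis on $n_i(T)$ for $i<p$ (via the Basel sum again), while in the $g(v)<p$ case each blocker is blamed at most once per descending line and not once per ancestor. Making this dichotomy yield the exact $\lceil n/(4p)\rceil$ bound is the technical heart of the proof.
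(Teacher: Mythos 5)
Your first step --- bounding $|U|\ge 3n/4$ via the Basel sum --- matches the paper exactly (the paper denotes $T[U]$ by $T'$). After that you diverge, and the divergence contains a genuine gap that you yourself flag. You propose a bottom-up greedy selection with an LCA-based directional constraint, and then a charging argument to count the paths; but as you note, the natural charge blames each claimed vertex for $O(p)$ unclaimed vertices of $U$, which only yields $\Omega(n/p^2)$ paths, a factor of $p$ short. You conjecture that separating the two failure modes will recover the lost factor, but you do not carry this out, and it is not at all clear the dichotomy you sketch is sufficient: in particular the ``$g(v)<p$'' case alone can plausibly account for $\Theta(p)$ blocked ancestors per claimed vertex, so the factor must be recovered by a genuinely new idea, not just bookkeeping. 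Since you explicitly call this unfinished step the technical heart of the proof, the proposal as written does not establish the lemma.

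The paper's proof sidesteps the charging problem entirely. It partitions $T'$ into maximal vertical paths $\{P_v : v\in S\cup L\}$, where $L$ is the set of non-root leaves of $T'$ and $S$ the set of vertices of $T'$ with at least two children in $T'$. Either $|L|\ge n/4p$ (and the paths hanging below the leaves of $T'$ finish the job), or $|S|<|L|<n/4p$ by the elementary fact that a rooted tree has more non-root leaves than branching vertices, so the partition has at most $r\le n/2p$ parts. Trimming each part to a multiple of $p$ then loses at most $(p-1)r\le n/2$ vertices out of $|T'|\ge 3n/4$, leaving at least $n/4$ vertices, i.e.\ at least $n/4p$ paths of order $p$. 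The laminar property is automatic from the path decomposition: only the lower endpoint of each $P_v$ can have more than one child in $T'$, so any two resulting paths are either nested in ancestry or live in unrelated subtrees. This \emph{global} accounting (sum of part lengths, minus $(p-1)$ per part) is what avoids the per-vertex charging and the attendant loss of a factor of $p$. I would suggest you adopt this route: replace your greedy selection with the $S\cup L$ path partition of $T'$ and bound $|S\cup L|$ directly.
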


\begin{proof}
  Let $T':=T-(\bigcup_{i=1}^{p-1} H_i(T))$ be the subtree of $T$ induced by vertices of height at least $p$.  Then,
  \[
    |T'|\ge |T| - \sum_{i=1}^{p-1} n_i(T)
    \ge n - \tfrac{3n}{2\pi^2}\sum_{i=1}^{p-1} \tfrac{1}{i^2}
    \ge  n - \tfrac{n}{4} = \tfrac{3n}{4} \enspace .
  \]
 Let $L$ be the set of non-root leaves of $T'$.  Each vertex in $L$ is the upper endpoint of a vertical path in $T$ of order $p$, as illustrated in \cref{disjoint_p_paths_figs}.  Therefore, if $|L|\ge \tfrac{n}{4p}$ then we are done, so assume that $|L|<\tfrac{n}{4p}$.  

  % \begin{figure}
  %   \begin{center}
  %     \includegraphics{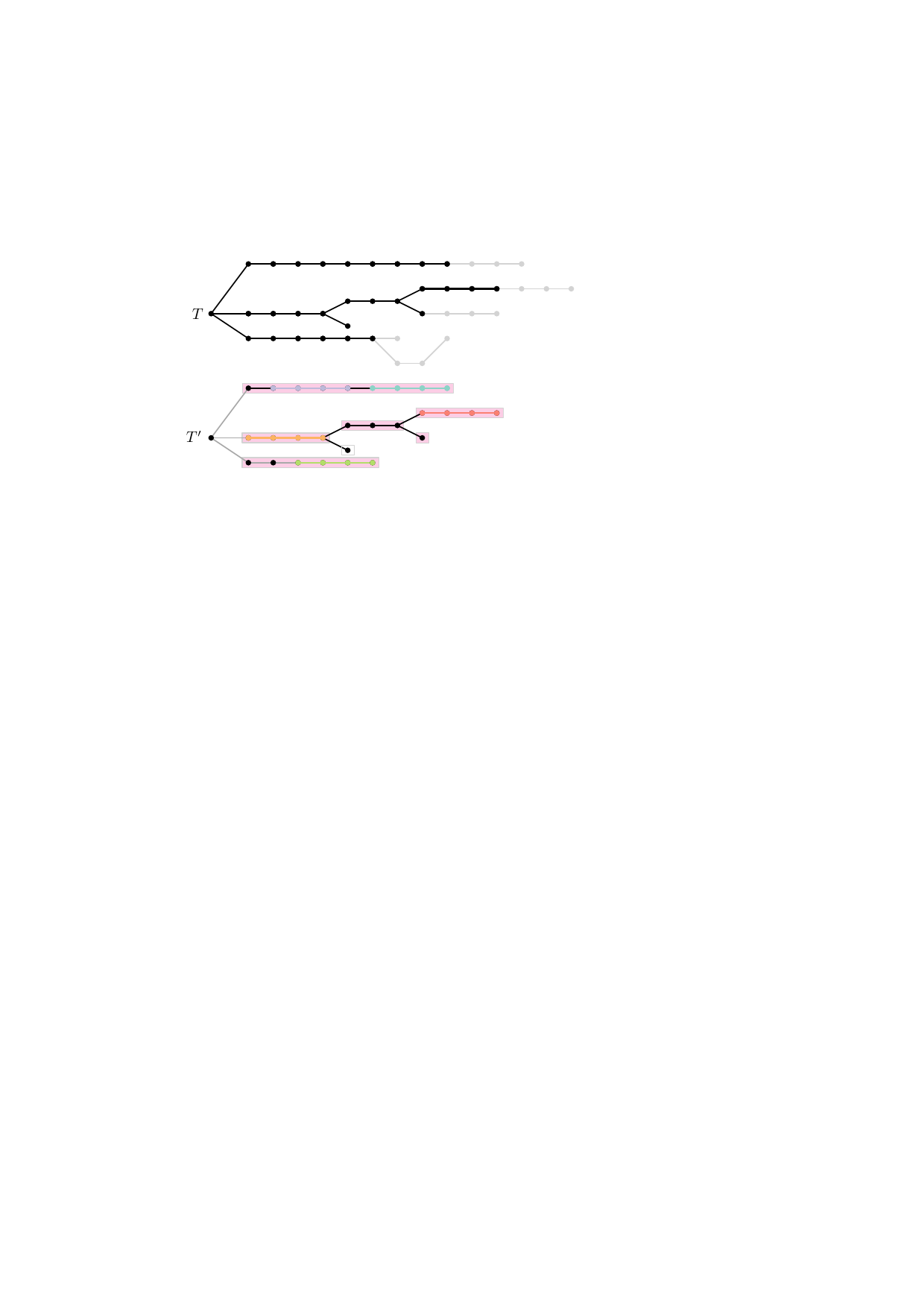}
  %   \end{center}
  %   \caption{Finding paths in the tree $T'$ induced by vertices of height at least $p=4$. Note that the figures are drawn so that for each edge, the left vertex is the upper endpoint.\worley{TODO: Update graphic w.r.t. reviewer A comment on middle branch leaf node of height one being kept in $T'$, and update to rotate the figure so it is vertical }} 
  %   \label{disjoint_p_paths_figs}
  % \end{figure}

%\worley{Replaced graphic with vertical version, with additional leaf node on middle branch now greyed out and excluded from $T'$} \david{good}

  \begin{figure}[h]
    \begin{center}
      \includegraphics[width=0.3\linewidth]{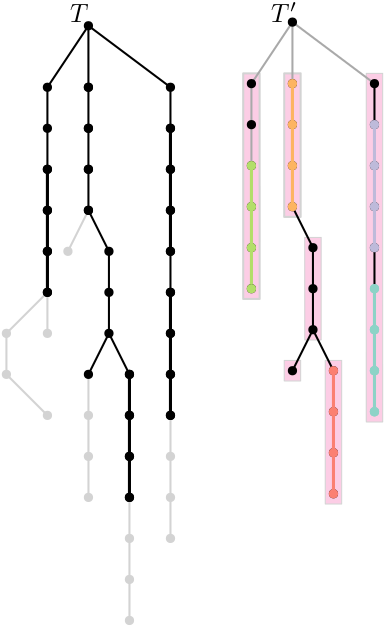}
    \end{center}
    \caption{Finding paths in the tree $T'$ induced by vertices of height at least $p=4$.} 
    \label{disjoint_p_paths_figs}
\end{figure}

  Let $S$ be the set of vertices of $T'$ that have two or more children in $T'$.  In any rooted tree, the number of non-root leaves is greater than the number of non-leaf vertices with at least two children\footnote{Let $n_i$ be the number of vertices with $i$ children in a rooted tree $T$. Thus $\sum_{i\geq 0} i n_i=|E(T)|<|V(T)|=\sum_{i\geq 0}n_i$. Hence, the number of non-root leaves 
  $n_0 > \sum_{i\geq 1} (i-1) n_i \geq \sum_{i\geq 2} n_i$, as claimed.}  Therefore, $|S|<|L|\le \tfrac{n}{4p}$.

  For each $v\in S\cup L$, let $P_v$ be the vertical path of maximum length whose lower endpoint is $v$ and that does not contain any vertex in $(S\cup L)\setminus \{v\}$.  Then $\mathcal{P}:=\{P_v:v\in S\cup L\}$ is a partition of $V(T')$ into at most $r:=|S|+|L|\le \tfrac{n}{2p}$ parts, each of which induces a vertical path in $T'$.

  Let $\{P_1,\ldots,P_r\}:=\mathcal{P}$ and, for each $i\in\{1,\ldots,r\}$, let $P'_i$ be a subpath of $P_i$ of order $p\floor{|P_i|/p}$. (So $P'_i$ has order rounded down to a multiple of $p$.)  Then 
  $$\sum_{i=1}^r |P_i'|\ge \sum_{i=1}^r (|P_i|-(p-1)) = |T'| - (p-1)r\ge \tfrac{3n}{4}-\tfrac{n}{2}=\tfrac{n}{4}.$$ 
  For each $i\in\{1,\ldots,r\}$, $P_i'$ can be partitioned into exactly $|P_i'|/p$ vertex-disjoint paths, each of order $p$.  The set $\mathcal{P}'$ of paths obtained this way has size $\ell := \sum_{i=1}^r |P_i'|/p \ge \tfrac{n}{4p}$.  Therefore $T$ contains $\ell$ pairwise vertex-disjoint paths, each of order $p$, where $\ell\ge \tfrac{n}{4p}$.  Except for its lower endpoint, each vertex of a path in $\mathcal{P}'$ has exactly one child in $T$.  This ensures that each path in $\mathcal{P}'$ is either completely related or completely unrelated to any other path in $\mathcal{P}'$.
\end{proof}

\subsection{The Product of Two Special Trees}

% We now complete the second part of our proof, by showing that the two trees guaranteed by \cref{subdivided_star_minor} and

\begin{lem}\label{star_times_star}
  Let $s,p\ge 1$ be integers, let $\ell:=5s^2$, and let $T$ be a rooted tree that contains $s^2$ pairwise-disjoint vertical paths, each of order $6p$ such that any pair of these paths is either completely related or completely unrelated.  Then $\gm(T\boxprod S_{\ell,2p})\ge sp$.
\end{lem}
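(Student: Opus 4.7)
The plan is to build the $(sp) \times (sp)$ grid minor tile-by-tile, using each of the $s^2$ vertical paths $Q_k$ of $T$ as the ``backbone'' of a $p \times p$ sub-grid, and assembling the $s^2$ sub-grids into an $s \times s$ macro-arrangement. The factor $5$ in $\ell = 5s^2$ suggests one should allocate $5$ arms of $S_{\ell,2p}$ to each path $Q_k$: one ``main'' arm for the interior of the tile, plus four ``port'' arms, one per macro-direction, for linking to macro-neighbours.

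First, I would build the local tile for each $Q_k$. Using the main arm $A$, the subgraph $Q_k \boxprod A$ is isomorphic to a $6p \times 2p$ grid, since $Q_k$ is a path of order $6p$ and $A$ is a path of order $2p$. Partitioning this $6p\times 2p$ grid into a $p\times p$ array of $6 \times 2$ blocks exhibits a $p \times p$ sub-grid minor, one branch set per block, with plenty of room on both the $Q_k$-side and the $A$-side for the blocks on the boundary to be extended by ``port pieces'' without disturbing the interior.

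Next, I would use the related-or-unrelated hypothesis to choose an $s \times s$ macro-arrangement of the $s^2$ tiles. Because every pair of paths is completely related or completely unrelated, the $Q_k$'s form a laminar family in $T$, so they can be organised as nodes of a forest $F$ under the $T$-ancestor relation. The key property this gives is that between any two paths $Q_k, Q_{k'}$ there is a canonical path in $T$ joining them whose interior is disjoint from $\bigcup_j V(Q_j)$: along the ancestor chain if the two are related, and through their lowest common ancestor if unrelated. I would pick the $s \times s$ macro-embedding so that macro-adjacent tiles are always joined by such a canonical, pairwise-compatible $T$-path.

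Finally, for each macro-adjacent pair $(Q_k, Q_{k'})$ and each $\alpha \in [p]$ indexing the $\alpha$-th row of boundary blocks on the shared side, I would route a connecting path that exits tile $k$ through one of its port arms at a specific vertex of $Q_k$, passes through the root-level $(\cdot, s_0)$, traverses the canonical $T$-path from $Q_k$ to $Q_{k'}$ at arm-depth $\alpha$, and enters tile $k'$ via a corresponding port arm. Assigning distinct arm-depths $\alpha$ to the $p$ boundary-row connections ensures the $p$ $T$-transits along the same macro-edge occupy disjoint levels of the port arm; dedicating a separate port arm to each of the four macro-directions of a tile keeps connections for the four directions from colliding; and the port-extension portions inside each $Q_k$ can be placed at $p$ distinct $T$-vertices of $Q_k$ to stay pairwise disjoint.

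The main obstacle will be the global consistency of the linking scheme: at a fixed arm-depth $\alpha$ of some port arm $A_i$, two different macro-edges using $A_i$ at depth $\alpha$ could share a $T$-vertex in the underlying Steiner tree of the $Q_j$'s and produce a collision. Handling this is precisely where the laminar structure of $F$ is essential: by ordering macro-edges along $F$ (e.g.\ via a DFS-like traversal of the forest) and choosing the macro-embedding compatibly, one arranges that the canonical $T$-routes underlying macro-adjacent tile pairs can be scheduled onto the available port arms and arm-depths without conflict, completing the $(sp) \times (sp)$ grid minor.
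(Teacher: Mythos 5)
Your macro-strategy matches the paper's: build a $p\times p$ sub-grid per vertical path in a dedicated arm, reserve extra arms as ``ports,'' and stitch the $s^2$ tiles into an $s\times s$ macro-grid. But the routing scheme you propose rests on a false claim, and it misses the actual role of the related-or-unrelated hypothesis.

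The claim that between any two paths $Q_k,Q_{k'}$ there is a ``canonical path in $T$ joining them whose interior is disjoint from $\bigcup_j V(Q_j)$'' fails: if $Q_1,Q_2,Q_3$ are pairwise completely related (e.g.\ all lying on a single root-to-leaf path), the only $T$-path from $Q_1$ to $Q_3$ runs straight through $Q_2$. You then stake the whole linking argument on scheduling these canonical routes onto arm-depths ``without conflict'' via a ``DFS-like traversal of the forest,'' and you acknowledge this is the main obstacle --- but this is precisely where all of the difficulty lives, and your sketch does not close it. The paper avoids the problem entirely: each tile $G_i$ is assigned four private port arms $T_{s^2+4i-3},\dots,T_{s^2+4i}$ (two for the left neighbour, two for the top neighbour), and within a private arm at arm-depth $k$ the \emph{entire} copy of $T$ is free to use, so the connecting paths in different arms are automatically disjoint and no collision-avoidance argument over $T$ is needed.

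You have also misdiagnosed why the ``completely related or completely unrelated'' hypothesis is there. It is not to make routes miss other $Q_j$'s. It is an order-preservation issue: the $p$ boundary vertices of $G_i$ sit at positions $p_{i,1},\dots,p_{i,p}$ along $P_i$ (increasing depth), and the $p$ boundary vertices of the left neighbour $G_j$ sit at positions $p_{j,2p+1},\dots,p_{j,3p}$ along $P_j$. When $P_i$ and $P_j$ are completely related, the naive routing in a single port arm --- go to arm-depth $k$, traverse the $T$-copy from $p_{i,k}$ to $p_{j,2p+k}$, come back --- produces $p$ pairwise-disjoint paths matching the boundaries in the correct order. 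When $P_i$ and $P_j$ are completely unrelated, the same one-arm routing would either force the $p$ transit paths through a common ancestor (so they intersect) or, if one avoids that, it reverses the matching so the top-left boundary vertex connects to the bottom-right one. The fix is a two-stage route through two arms, using $p$ spare vertices $p_{i,5p-k+1}$ of $P_i$ as turnaround points; this is exactly why two port arms per incoming side are reserved and why $\ell = 5s^2 = s^2 + 4s^2$. Your proposal never mentions order reversal and budgets only one port arm per macro-direction, which is not enough in the unrelated case.
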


\begin{proof}
  Recall that $S_{\ell,2p}$ has vertex set $\{v_0\}\cup\{v_{i,j}:(i,j)\in \{1,\ldots,\ell\}\times\{1,\ldots,2p\}\}$. For each $i\in\{1,\ldots,\ell\}$, let $A_i=S_{\ell,2p}[\{v_{i,1},\ldots,v_{i,2p}\}]$ denote the $i$th \defn{arm} of $S_{\ell,2p}$, which is a path of order $2p$.  Let $P_1,\ldots,P_{s^2}$ be pairwise vertex-disjoint paths in $T$, each of order $6p$, each pair of which is either completely related or completely unrelated.  For each $i\in\{1,\ldots,s^2 \}$, let $P_i:= p_{i,1},\ldots,p_{i,6p}$ where $p_{i,1}$ is the upper endpoint of $P_i$. 
  Let $T_0:=T\boxprod \{v_0\}$.  
  For each $i\in\{1,\ldots,\ell\}$ let $T_i:=T\boxprod A_i$, for each $j\in \{1,\ldots,p\}$ let $T_{i,j}:=T\boxprod \{v_{i,j}\}$, for each $k\in\{1,\ldots,s^2\}$ let $P_{k,i}:=P_k\boxprod A_i$ and $P_{k,i,j}:=P_k\boxprod \{v_{i,j}\}$. Note that $P_{k,i}$ is isomorphic to a $6p \times 2p$ grid. 
%  \referee{I think that for the pace of the proof, it would be nice to put right after the definition of $P_{k,i}$ the observation that $P_{k,i}$ is isomorphic to a $6p \times 2p$ grid.} \worley{Does this note flow well with the proof with respect to the reviewer comment above?} \david{this is fine, except replace $p6$ by $6p$}

  Refer to \cref{product_fig}. 
  Consider $T_i$ for some $i\in\{1,\ldots,\ell\}$. For visualization purposes, it is helpful to organize $T_i$ into $2p$ rows $T_{i,1},\ldots,T_{i,2p}$.  For each $j\in\{1,\ldots,2p-1\}$, $T_{i,j}$ and $T_{i,j+1}$ are `adjacent', in the sense that each vertex $(a,v_{i,j})\in V(T_{i,j})$ is adjacent to $(a,v_{i,j+1})\in V(T_{i,j+1})$.  We then organize $T_1,\ldots,T_{\ell}$ into a sequence of blocks. These blocks are independent in the sense that there is no edge between $T_{i}$ and $T_j$ for any $i\neq j$.  Moreover, there is an additional row $T_0$ that is adjacent to the first row, $T_{i,1}$, of each block $T_i$.

  \begin{figure}[H]
    \begin{center}
      \includegraphics[page=4]{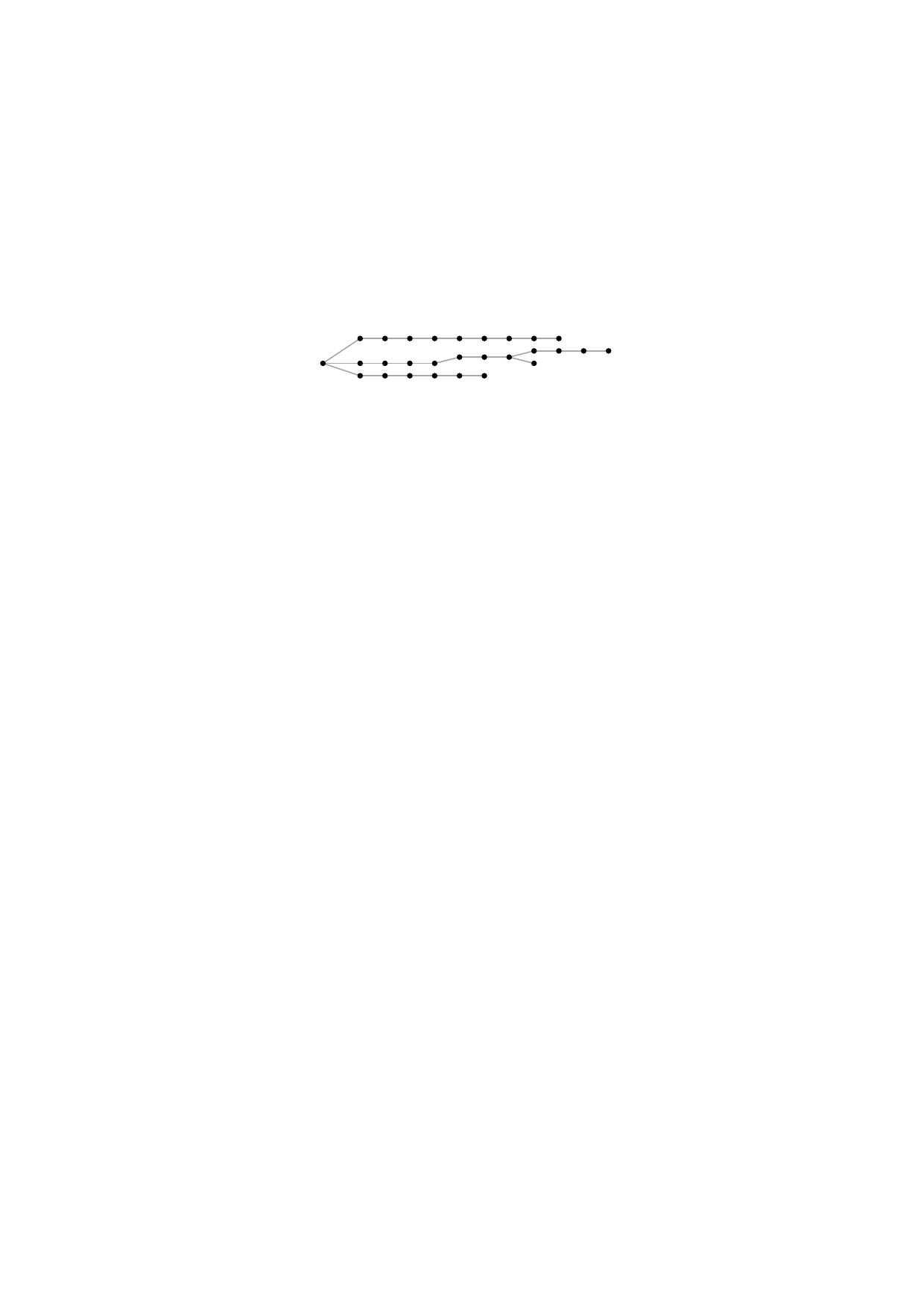}
    \end{center}
    \caption{Visualizing the product in \cref{star_times_star}}
    \label{product_fig}
  \end{figure}

  Refer to \cref{grid_partition}.  We will construct a model of $\boxplus_{sp}$. 
  We partition this model into $s^2$ subgrids each of which is isomorphic to $\boxplus_p$.  Therefore, we need $s^2$ such subgrids $G_1,\ldots,G_{s^2}$. The branch sets of each subgrid $G_i$ will include a $p\times p$ grid within the $6p\times 2p$ grid $P_{i,i}$ (which is contained in the block $T_i$).  The additional row $T_0$ will allow us to extend the branch sets of the $4p-4$ boundary vertices of the $G_i$ into $T_{i'}$ for any $i'$ and from there they can be extended so that they are adjacent to any other subgrid $G_j$.

  \begin{figure}[t]
    \begin{center}
      \includegraphics{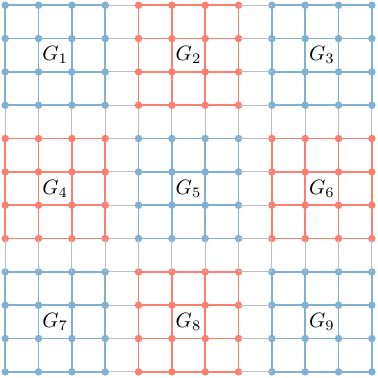}
    \end{center}
    \caption{The $sp\times sp$ grid can be partitioned into $(sp/p)^2 = s^2$ subgrids, each of which is a $p\times p$ grid. (The case $sp=12$ and $p=4$ is shown here.)}
    \label{grid_partition}
  \end{figure}

  Refer to \cref{subgrid}. To construct the branch sets for $G_i$ we start with a $p\times p$ subgrid in $P_{i,i}$ whose top row is $(p_{i,p+1},v_{i,1}),\ldots,(p_{i,2p},v_{i,1})$.  This subgrid has $4p-4$ `boundary' vertices, four of which are `corner' vertices. As illustrated in \cref{subgrid}, we create $4p$ disjoint paths in $P_{i,i}$ from the boundary vertices to $P_{i,i,1}$.  We then add one vertex of $P_{i,0}$ to each path.  In this way, the first $4p$ vertices of the path $P_{i,0}$ are partitioned into four subpaths, each of size $p$ corresponding to edges coming out of the left, top, right, and bottom of boundary of $G_i$.  The final $2p$ vertices of $P_{i,0}$ are not yet used (though we may add them to the branch sets of some boundary vertices later).

  \begin{figure}[t]
    \begin{center}
      \includegraphics[width=\textwidth]{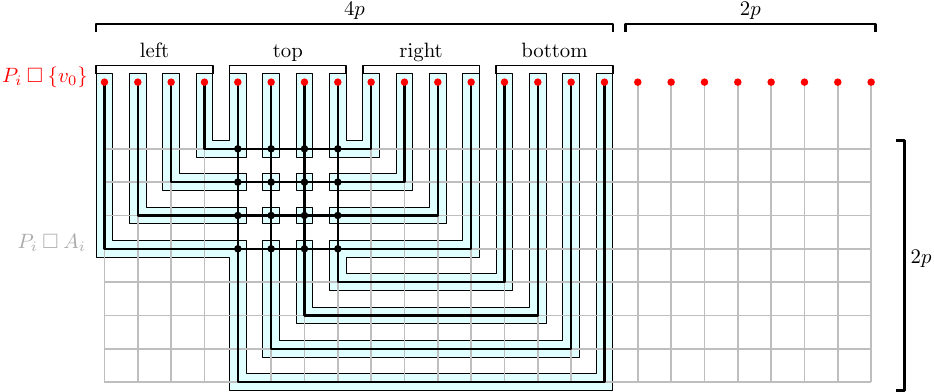}
    \end{center}
    \caption{One of the $p\times p$ subgrids used in the proof of \cref{star_times_star}.}
    \label{subgrid}
  \end{figure}

  Our model is not yet complete.  At this point, it is a model of a graph that consists of $s^2$ components, each of which is a $p\times p$ grid.  At this point, the vertices in our (not yet-complete) model are all contained in $T_0,T_1,\ldots,T_{s^2}$ and, for each $i\in\{1,\ldots,s^2\}$, the branch sets of vertices in $G_i$ are contained in $P_{i,i}\cup P_{i,0}\subseteq T_{i}\cup P_{i,0}$.  This still leaves the vertices of $T_{s^2+1},\ldots,T_{5s^2}$ unused.  We will use these to extend the branch sets of vertices on the boundary of each subgrid $G_i$ to create the required adjacencies.  For each $i\in\{1,\ldots,s^2\}$, the vertices of $T_{s^2+4i-3},\ldots,T_{s^2+4i}$ will be reserved for the branch sets of $G_i$.

% \referee{Page 11, paragraph 2, line 5: “$x′_k := p_{i, p-k+1}$” instead of “$p_{i,k}$” since the order top-to-bottom becomes a right-to-left with the rotation for the left side of the grid. And for the same reason, in page 13, all the occurrences of $p_{i,5p-k+1}$ should be $p_{i,4p+k}$ (since the new rotations put them now in a left-to-right order). This two changes should suffice to fix the proof.}
% \worley{Updated indexing accordingly, I believe the comment is correct.} \david{Pat: please check}

  First, suppose that $G_i$ is a subgrid that is immediately to the right of some subgrid $G_j$, so that the left boundary of $G_i$ is adjacent to the right boundary of $G_j$.  We will extend the branch sets for vertices on the left boundary of $G_i$ so that they become adjacent to the branch sets for vertices in the right boundary of $G_j$. Let $x_1,\ldots,x_p$ be the vertices on the left boundary of $G_i$, ordered from top to bottom and, for each $k\in\{1,\ldots,p\}$, let $x_k':=p_{i,p-k+1}$
  % \worley{Updated, previously $x_k'=p_{i,k}$} \david{Pat: please check} \pat{Checked, correct.}
  so that $(x_k',v_0)$ is already included in the branch set for $x_k$.  Let $y_0,\ldots,y_p$ be the vertices on the right boundary of $G_j$, ordered from top to bottom and, for each $k\in\{1,\ldots,p\}$, let $y_k':=p_{j,2p+k}$ so that $(y_k',v_0)$ is already included in the branch set of $y_k$. There are two cases to consider.  (We strongly urge the reader to refer to \cref{straight,reverso}.)

% \david{I think it will help the reader if the text describing Figure 6 appears on the same page as the figure. So I have added a newpage here temporarily. I suggest when the paper is finished we play with the margins to remove this big whitespace.}
%\newpage

  \begin{itemize}
    \item $P_i$ and $P_j$ are completely related (see \cref{straight}): We will extend the branch sets of $x_1,\ldots,x_p$ into $T_{s^2+4i-3}$.  For each $k\in\{1,\ldots,p\}$, we extend the branch set of $x_k$ by adding the path
    \[
      (x_k',v_{s^2+4i-3,1}),\ldots,(x_k',v_{s^2+4i-3,k}) \enspace ,
    \]
    the path in $T_{s^2+4i-3,k}$ from $(x_k',v_{s^2+4i-3,k})$ to $(y_k',v_{s^2+4i-3,k})$, and the path
    \[
      (y_k',v_{s^2+4i-3,k}),\ldots,(y_k',v_{s^2+4i-3,1}) \enspace .
    \]
    The first vertex $(x_k',v_{s^2+4i-3,1})$ of this path is adjacent to $(x_k',v_0)$, which ensures that the branch set for $x_k$ is connected.  The last vertex $(y_k',v_{s^2+4i-3,1})$ is adjacent to $(y_k',v_0)$ which ensures that the branch sets for $x_k$ and $y_k$ are adjacent.

    \begin{figure}[t]
      \begin{center}
      \includegraphics[scale=0.9]{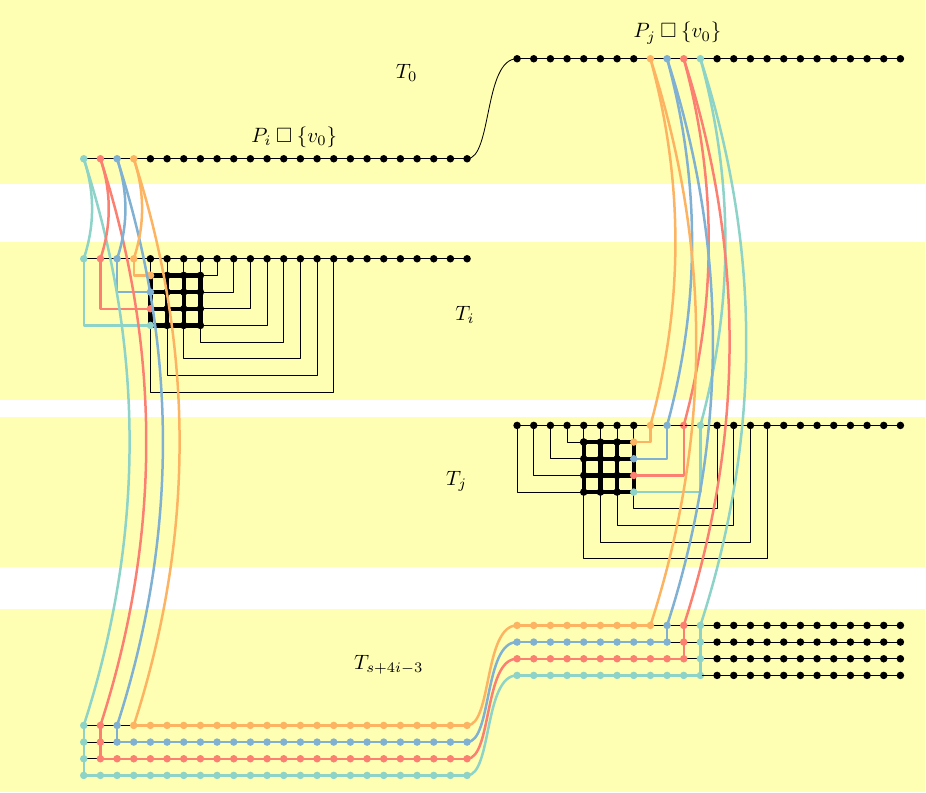}
      \end{center}
      \caption{Connecting the left side of $G_i$ to the right side of $G_j$ when $P_i$ and $P_j$ are completely related.}
      \label{straight}
    \end{figure}

    \newpage

    \item $P_i$ and $P_j$ are completely unrelated (see \cref{reverso}): We will extend the branch sets of $x_1,\ldots,x_k$ into $T_{s^2+4i-3}$ and $T_{s^2+4i-2}$. To make the connections between these two blocks we will use an additional $p$ vertices of $P_{i,0}$.  The need for a second block in this case is due to the fact that the obvious paths in $T_{s^2+4i-3}$ that were used in the previous case would either intersect each other or reverse the order of connections so that the top-left vertex of $G_i$ would become adjacent to the bottom-right vertex of $G_j$. Routing these paths through two trees allows us to make the connections in the right order using pairwise vertex-disjoint paths.

     \begin{figure}[H]
      \begin{center}
        \includegraphics[scale=0.9]{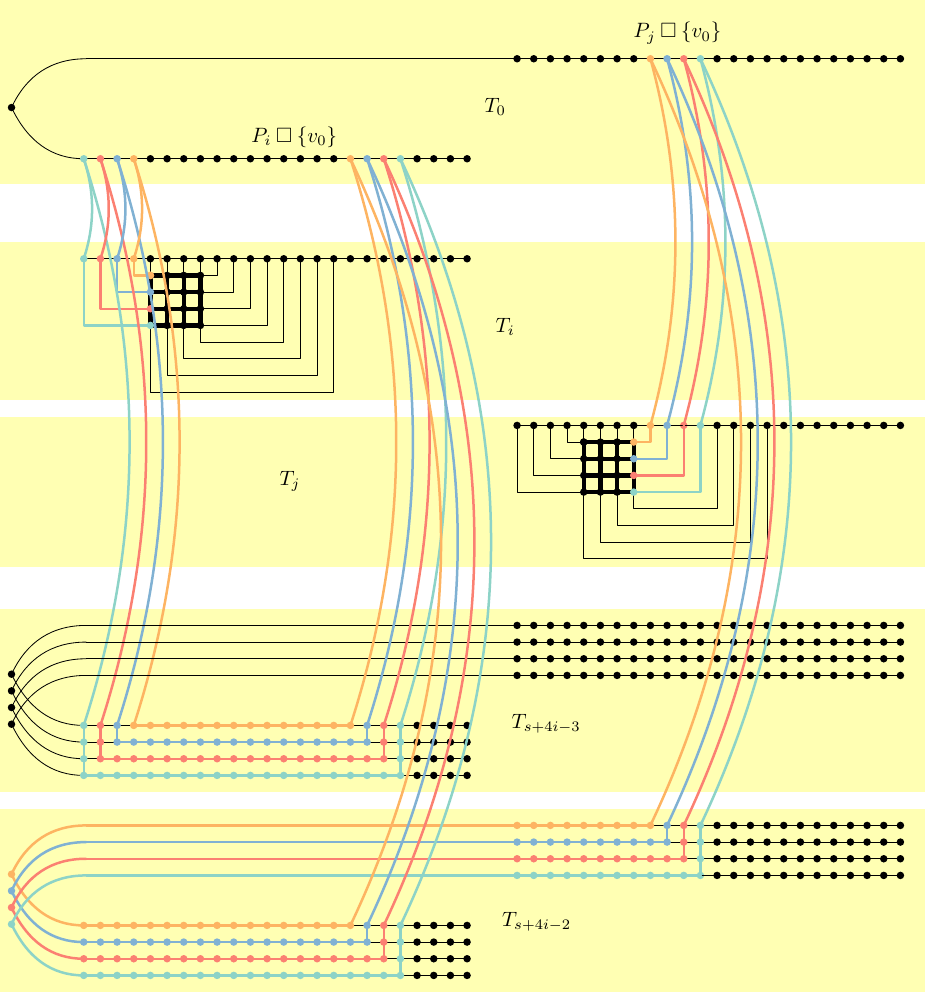}
      \end{center}
      \caption{Connecting the left side of $G_i$ to the right side of $G_j$ when $P_i$ and $P_j$ are completely unrelated.}
      \label{reverso}
    \end{figure}

    For each $k\in\{1,\ldots,p\}$ we extend the branch set of $x_k$ by adding the path,
    % \worley{Updated all instances of $p_{i,5p-k+1}$ to $p_{i, 4p+k}$ to maintain left to right order} \david{Pat: please check} \pat{Looks good.}
    \[
      (x_k',v_{s^2+4i-3,1}),\ldots,(x_k',v_{s^2+4i-3,k}) \enspace ,
    \]
    the path in $T_{s^2+4i-3,k}$ from  $(x_k',v_{s^2+4i-3,k})$ to $(p_{i,4p+k},v_{s^2+4i-3,k})$, the path
    \[
      (p_{i,4p+k},v_{s^2+4i-3,k}),\ldots,(p_{i,4p+k},v_{s^2+4i-3,0}),(p_{i,4p+k},v_{s^2+4i-2,1}),\ldots,(p_{i,4p+k},v_{s^2+4i-2,k})
    \]
    the path in $T_{s^2+4i-2,k}$ from $(p_{i,4p+k},v_{s^2+4i-2,k})$ to $(y_k',v_{s^2+4i-2,k})$ and finally the path
    \[
      (y_k',v_{s^2+4i-3,k}),\ldots,(y_k',v_{s^2+4i-3,1}) \enspace .
    \]
    As in the previous case, the first vertex of this path ensures that the branch set for $x_k$ is connected and the last vertex ensures that the branch sets of $x_k$ and $y_k$ are adjacent.
  \end{itemize}

  So far every horizontal grid edge is modelled, but not the vertical edges between $p\times p$ subgrids.  We now sketch how these can be included.  Suppose that the subgrid $G_i$ is directly below the subgrid $G_j$.  Let $x_1,\ldots,x_p$ be the top boundary of $G_i$ ordered so that $x_1$ is the the leftmost vertex and $x_p$ is the rightmost.  For each $k\in\{1,\ldots,p\}$, let $x_k':=(p_{i,p+k})$ so that the branch set of $x_k$ includes $x_k'$.  Let $y_1,\ldots,y_p$ be the bottom boundary of $G_j$ ordered so that $y_1$ is the the leftmost vertex and $y_p$ is the rightmost.  For each $k\in\{1,\ldots,p\}$, let $y_k':=(p_{j,4p-k+1})$ so that the branch set of $y_k$ includes $y_k'$.  Observe that $x_1',\ldots,x_p'$ occur in order along $P_{i}$ but $y_1',\ldots,y_k'$ occur in reverse order along $P_j$.  The effect of this is to reverse the two cases that appear above, so that the straightforward case occurs when $P_i$ and $P_j$ are completely unrelated and the more complicated case occurs when they are completely related.  Otherwise, the process of growing the branch sets for $x_1,\ldots,x_p$ is the same except that the vertices used to grow these new branch sets are contained in $T_{s+4i-1}$, $T_{s+4i}$, and $(p_{i,5p+1},v_0),\ldots,(p_{i,6p},v_0)$.  This ensures that these branch sets do not reuse vertices that are used to make $G_i$ adjacent to the neighbour on its left.
  
    % \worley{replaced $r \times r$ with $sp \times sp$} \david{fine}
    
  Checking that the resulting collection of branch sets is indeed a model of the $sp\times sp$ grid is straightforward; both the disjointedness of the branch sets and the required adjacencies are guaranteed by the construction.
\end{proof}

We now establish our lower bound on the largest grid minor in a Cartesian product:

\begin{thm}\label{lower_bound}
For any connected graphs $G_1$ and $G_2$ each having at least $n\ge 1$ vertices, 
$$\gm(G_1\boxprod G_2)\in\Omega(\sqrt{n}).$$
\end{thm}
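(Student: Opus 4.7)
The plan is to reduce to spanning trees and case-split on the first height at which each tree accumulates many vertices. I first pass to any $n$-vertex connected subgraph $G_b'\subseteq G_b$ and take a rooted spanning tree $T_b$ of $G_b'$; since $T_1\boxprod T_2$ is a subgraph of $G_1\boxprod G_2$, it suffices to prove $\gm(T_1\boxprod T_2)\in\Omega(\sqrt n)$. For each $b\in\{1,2\}$, define
\[
  i_b^* := \min\{i\ge 1 : n_i(T_b)>\tfrac{3}{2}n/(\pi i)^2\},
\]
which exists because the Basel identity gives $\sum_{i\ge 1}\tfrac{3}{2}n/(\pi i)^2 = n/4 < n=\sum_{i\ge 1}n_i(T_b)$. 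Assume without loss of generality $i_1^*\ge i_2^*$.

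In the \emph{wide} case where $i_2^*\le 5$, I have $m:=n_{i_2^*}(T_2)>\tfrac{3n}{50\pi^2}\in\Omega(n)$. \cref{same_height_unrelated} gives $S_{m,i_2^*}\preceq T_2$, and contracting the interior edges of each arm yields $S_m\preceq T_2$. Choosing any connected $m$-vertex subgraph $H\subseteq G_1$ and applying \cref{anything_times_star} together with \cref{minor_product} produces $K_m\preceq H\boxprod S_m\preceq G_1\boxprod T_2\preceq G_1\boxprod G_2$, so $\boxplus_{\lfloor\sqrt m\rfloor}\preceq K_m\preceq G_1\boxprod G_2$ with $\sqrt m\in\Omega(\sqrt n)$.

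In the \emph{tall} case where $i_2^*\ge 6$ (hence also $i_1^*\ge 6$), I set $p:=\lfloor i_2^*/6\rfloor\ge 1$, so $6p\le i_2^*\le i_1^*$ and $2p\le i_2^*$. The minimality of $i_1^*$ implies $n_i(T_1)\le\tfrac{3}{2}n/(\pi i)^2$ for every $i\le 6p-1$, so \cref{disjoint_p_paths} (with its $p$ replaced by $6p$) provides $M_1:=\lceil n/(24p)\rceil$ pairwise disjoint vertical paths of order $6p$ in $T_1$, any two of which are completely related or completely unrelated. Simultaneously, contracting arms of $S_{n_{i_2^*}(T_2),i_2^*}\preceq T_2$ from length $i_2^*$ down to length $2p$ yields $S_{\ell_2,2p}\preceq T_2$ with $\ell_2:=n_{i_2^*}(T_2)>\tfrac{3}{2}n/(\pi i_2^*)^2$. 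Setting $s:=\lfloor\sqrt{\min(M_1,\ell_2/5)}\rfloor$ fulfils both hypotheses of \cref{star_times_star}, which together with \cref{minor_product} yields
\[
   \gm(T_1\boxprod T_2)\ge\gm(T_1\boxprod S_{5s^2,2p})\ge sp.
\]

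The hard part is choosing $p$ so that (i) $T_1$ satisfies the hypothesis of \cref{disjoint_p_paths} up to order $6p$, (ii) $T_2$ has arms of length at least $2p$, and (iii) the resulting product $sp$ is $\Omega(\sqrt n)$. The value $p=\lfloor i_2^*/6\rfloor$ balances these: a routine calculation gives $p^2 M_1\ge np/24\ge n/24$ and, using $p\ge 1$ for $i_2^*\in\{6,\dots,11\}$ and $p\ge i_2^*/12$ for $i_2^*\ge 12$, also $p^2\ell_2/5\in\Omega(n)$. Consequently $(sp)^2\in\Omega(n)$, yielding the required $\boxplus_{\Omega(\sqrt n)}$ minor.
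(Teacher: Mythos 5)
Your proposal mirrors the paper's proof almost exactly: pass to spanning trees, define the threshold index $p_b$ (your $i_b^*$) via the Basel bound, split into the shallow case $p_2\le 5$ (handled by \cref{anything_times_star}) and the deep case $p_2\ge 6$ (handled by \cref{disjoint_p_paths} followed by \cref{star_times_star}). Your minor variants — using a strict inequality in the definition of $i_b^*$, taking $\ell_2=n_{i_2^*}(T_2)$ directly rather than $\ell=\lceil\tfrac{3n}{2(\pi p_2)^2}\rceil$, and invoking \cref{disjoint_p_paths} with parameter $6p$ instead of $p_2$ — only tighten the constants and do not change the argument.

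One caveat worth recording: the final step ``$s^2 p^2\in\Omega(n)$ hence $sp\in\Omega(\sqrt n)$'' glosses over the regime where $\ell_2$ is tiny, and there the floor kills $s$. If $T_1=T_2=P_n$, then $i_2^*=\Theta(\sqrt n)$, $p=\Theta(\sqrt n)$, and $\ell_2=n_{i_2^*}(T_2)=1$, so $\min(M_1,\ell_2/5)<1$ and $s=0$; your chain $p^2\ell_2/5\in\Omega(n)\Rightarrow(sp)^2\in\Omega(n)$ breaks because $s=\lfloor\sqrt{\min(M_1,\ell_2/5)}\rfloor$ need not satisfy $s^2\ge c\,\min(M_1,\ell_2/5)$ when the quantity under the floor is below $1$. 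The paper's ``$s=\Theta(\sqrt n/p)$'' has the identical issue, so you are not worse off than the source, but the gap is real and should be closed: whenever $\ell_2\le 4$ one has $i_2^*>\sqrt{3n/(8\pi^2)}\in\Omega(\sqrt n)$, and since $p_1\ge p_2$ both $T_1$ and $T_2$ contain a vertical path of order $i_2^*$ (because $\sum_{i<i_2^*}n_i(T_b)<n/4$ leaves more than $3n/4$ vertices of height $\ge i_2^*$), so $\boxplus_{i_2^*}=P_{i_2^*}\boxprod P_{i_2^*}\preceq T_1\boxprod T_2$ directly, finishing that case.
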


% \worley{Updated ``for all $i\in\{1,\ldots,p_2\}$'' to  ``for all $i\in\{1,\ldots,p_2-1\}''$. I looked through and dont believe this change has any other effects on the proof} \david{Pat: please check} \pat{Looks good.}

\begin{proof}
  For each $b\in\{1,2\}$, let $T_b$ be a tree contained in $G_b$ and having exactly $n$ vertices (which can be constructed by successively deleting leaves starting with a spanning tree of $G_b$).  
  For each $b\in\{1,2\}$, let $p_b=\min\{i: n_i(T_b)\ge \tfrac{3n}{2(\pi i)^2}\}$.  (This is well-defined since, otherwise $n=\sum_{i=1}^\infty n_i(T_b) < \sum_{i=1}^\infty \tfrac{3n}{2(\pi i)^2} = \frac{n}{4}$.)  Without loss of generality, assume $p_2 \le p_1$ and let $\ell:=\ceil{\tfrac{3n}{2(\pi p_2)^2}}$. By \cref{same_height_unrelated}, $S_{\ell,p_2}\preceq T_2\preceq G_2$.  If $p_2 \le 5$ then $\ell > \frac{3n}{50\pi^2}\in\Omega(n)$ and by \cref{anything_times_star} $K_{\ell}\preceq G_1 \boxprod S_\ell$.
  Since $\boxplus_{\sqrt{\ell}}\preceq K_{\ell}$, this implies that $\gm(G_1\boxprod G_2)\ge \sqrt{\ell}=\Omega(\sqrt{n})$ and we are done, so we may assume that $p_2\ge 6$. Let $p:=\floor{p_2/6}\ge 1$.
  
  Since $p_1\ge p_2$, $n_i(T_1)\le \tfrac{3n}{2(\pi i)^2}$ for all $i\in\{1,\ldots,p_2-1\}$.  Therefore, \cref{disjoint_p_paths} implies that $T_1$ contains at least $n/4p_2$ pairwise disjoint paths $P_1,\ldots,P_{\ceil{n/4p_2}}$, each of length $p_2\ge 6p$, such that each pair of paths is either completely related or completely unrelated.  Let
  \[
    s:=\floor{\min\{\sqrt{\ell/5}, \sqrt{n/4p_2}\}} = \Theta(\sqrt{n}/p)
  \]
  so that $\ell \ge 5s^2$ and $\ceil{n/4p_2}\ge s^2$.  By \cref{star_times_star}, $\gm(T_1 \boxprod S_{\ell,6p}) \ge sp=\Theta(\sqrt{n})$.  The lemma now follows from \cref{minor_product}, the fact that $T_1\preceq G_1$, and the fact that $S_{\ell,6p}\preceq S_{\ell,p_2}\preceq G_2$.
  %
  %
  %
  %
  %   \pat{We don't immediately get our result from \cref{star_times_star} because $p=\min\{p_1,p_2\}$.  To see this, consider the case where $p_1=1$, $\ell_1=n$, $p_2=\log n$ and $\ell_2=n/\log^2 n$.  Then $p=1$ and $N=p_2\ell_2=n/\log n$ so \cref{star_times_star} only guarantees that $\gm(G_1\boxprod G_2)=\Omega(\sqrt{p N})=\Omega(\sqrt{n/\log n})$.  Of course, \cref{anything_times_star} handles this, but....}
  %
  % Right now, the best argument I can think of is to branch on the value of $p$.  Apply \cref{newer_subdivided_star_minor} to each of $G_1$ and $G_2$ and suppose that $p_1< p_2$.  If $p:=p_1 < \sqrt{\log n}$ then use \cref{anything_times_star} to deduce that $G_1\boxprod G_2\succeq K_{n/p}\succeq K_{n/\sqrt{\log n}}\succeq \boxplus_{\sqrt{n}/\log^{1/4} n}$, so $\gm(G_1\boxprod G_2)=\Omega(\sqrt{n}/\log^{1/4} n)$.  Otherwise, apply \cref{new_subdivided_star_minor} to $G_2$ to deduce that $G_2\succeq S_{\ell_2,p_2}$ with $\ell_2p_2 \ge n/\log n$ (and $p_2\ge \log^{1/4} n$?).  Then apply \cref{star_times_star} to deduce that $\gm(G_1\boxprod G_2)=\Omega(\sqrt{p\ell_2p_2}) \ge \sqrt{n/\sqrt{\log n}}=\Omega(n/\log^{1/4} n)$.
  % Let $\ell = \min\{\ell_1, \ell_2\}$ and $p = \min\{p_1, p_2\}$. Then, by Lemma~\ref{star_times_star}, $\gm(S_{\ell_1,p_1}\boxtimes S_{\ell_2,p_2})\in\Omega(\sqrt{pN}) = \Omega(\sqrt{\ell p^2}) = \Omega(\sqrt{n})$. Since $S_{\ell_b,p_b}\preceq T_b\preceq G_b$ for each $b\in\{1,2\}$, the result follows from two applications of Observation~\ref{minor_product}.
\end{proof}

Our next result completes the relationships between grid minors and treewidth in Cartesian and strong products of trees.  

\begin{thm}\label{quadratic_grid_minor}
    For any two trees $T_1$ and $T_2$, 
    \[
\gm(T_1\boxprod T_2) \leq
\gm(T_1\boxtimes T_2) \leq \tw(T_1\boxtimes T_2) \in O(\gm(T_1\boxprod T_2)^2)
 \enspace .
\]
\end{thm}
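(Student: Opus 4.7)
The plan is to chain together three ingredients, two of which are essentially free and only one of which requires any real work (and even that work has already been done earlier in the paper).

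First, the leftmost inequality $\gm(T_1\boxprod T_2) \leq \gm(T_1\boxtimes T_2)$ is immediate from the fact that $T_1\boxprod T_2$ is a spanning subgraph of $T_1\boxtimes T_2$ (stated after the definitions of the products), and hence any grid minor of the Cartesian product is also a grid minor of the strong product. The middle inequality $\gm(T_1\boxtimes T_2) \leq \tw(T_1\boxtimes T_2)$ is just \eqref{tw_gte_gm} applied to the graph $T_1\boxtimes T_2$.

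So the only content is the final inequality $\tw(T_1\boxtimes T_2) \in O(\gm(T_1\boxprod T_2)^2)$. I would proceed as follows. Without loss of generality assume $|V(T_1)| \leq |V(T_2)|$, and set $n:=|V(T_1)|$. By \eqref{TreewidthProductTrees} we already know that
\[
\tw(T_1\boxtimes T_2)\leq 2n-1.
\]
On the other hand, since $T_1$ and $T_2$ are connected graphs each having at least $n$ vertices, \cref{lower_bound} gives
\[
\gm(T_1\boxprod T_2)\in\Omega(\sqrt{n}),
\]
so $n \in O(\gm(T_1\boxprod T_2)^2)$. Combining these two bounds yields
\[
\tw(T_1\boxtimes T_2)\leq 2n-1\in O(\gm(T_1\boxprod T_2)^2),
\]
which completes the proof.

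There is no real obstacle here because all the heavy lifting has already been done: the linear upper bound on $\tw(T_1\boxtimes T_2)$ in terms of $\min(|V(T_1)|,|V(T_2)|)$ comes from \cref{EasyUpperBound} (using $\tw(T_1)=1$), and the $\Omega(\sqrt{n})$ lower bound on $\gm(T_1\boxprod T_2)$ is exactly \cref{lower_bound}. The only thing to take care of in the write-up is the WLOG assumption on which tree is smaller and being clear that \cref{lower_bound} is being invoked with $n = \min(|V(T_1)|,|V(T_2)|)$.
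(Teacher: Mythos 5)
Your proof is correct and follows essentially the same route as the paper: both establish the first two inequalities by subgraph containment and \eqref{tw_gte_gm}, then combine the $O(\min(|V(T_1)|,|V(T_2)|))$ upper bound on $\tw(T_1\boxtimes T_2)$ from \cref{EasyUpperBound} (equivalently \eqref{TreewidthProductTrees}) with the $\Omega(\sqrt{n})$ lower bound of \cref{lower_bound}. The only cosmetic difference is that the paper makes the hidden constant in \cref{lower_bound} explicit rather than working entirely in asymptotic notation.
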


\begin{proof}
    First note that $\gm(T_1 \boxprod T_2) \le \gm(T_1 \boxtimes T_2)$ since  
    $T_1 \boxprod T_2 \subseteq T_1 \boxtimes T_2$. \cref{tw_gte_gm} shows that 
    $\gm(T_1 \boxtimes T_2) \le \tw(T_1\boxtimes T_2)$. It remains to show that $\tw(T_1\boxtimes T_2) \in O(\gm(T_1 \boxprod T_2)^2)$.
    
    Let $n_1:=|V(T_1)|$, let $n_2:=|V(T_2)|$, and assume without loss of generality that $n_1\le n_2$. By \cref{EasyUpperBound}, 
    %Root $T_2$ at an arbitrary vertex.  Then $T_1\boxtimes T_2$ has a tree decomposition $\mathcal{T}:=(B_x:x\in V(T_2))$ where $B_x$ contains $V(T_1)\times\{x\}$ and, if $x$ has parent $y$, then $B_x$ also contains $V(T_1)\times\{y\}$.  This decomposition has width at most $2n_1-1$, so 
    $\tw(T_1\boxtimes T_2)\le 2n_1-1$.  By \cref{lower_bound}, $c\cdot\gm(T_1\boxprod T_2) \ge \sqrt{2n_1}$ for some fixed positive constant $c$. 
    Therefore, 
    \[
       (c\cdot\gm(T_1\boxprod T_2))^2 \ge 2n_1 > \tw(T_1\boxtimes T_2) \enspace . \qedhere
    \]
\end{proof}

%\david{Can we say that each of the inequalities in \cref{quadratic_grid_minor} is tight up to a constant factor for certain trees $T_1$ and $T_2$ (depending on the inequality)?} \pat{I think so.  The first one is tight by \cref{StarTreeUpperBound,lower_bound}, so $\gm(S_n\boxprod T)=\Theta(\sqrt {n})$ and $\gm(S_n\boxtimes T)=\Theta(\sqrt{n})$.  The second one is tight because $\gm(P_n\boxtimes P_n)=\Theta(n)$ and $\tw(P_n\boxtimes P_n)=\Theta(n)$.  The third one is tight because $\tw(S_n\boxtimes P_n)=\Theta(n)$ and $\gm(S_n\boxprod P_n)=\Theta(\sqrt{n})$.  }

It is worth pointing out that each of the inequalities in~\cref{quadratic_grid_minor} is tight for certain trees $T_1$ and $T_2$. The first two inequalities are tight for the product of two paths. Specifically, it is obvious that $\gm(P_n\boxprod P_n)=\gm(P_n\boxtimes P_n)=n$, and $\tw(P_n\boxtimes P_n)<2n$ by \eqref{TreewidthProductTrees}. 
%\cref{StarTreeUpperBound} shows that for any $n$-vertex tree $T$, $\gm(S_n\boxprod T) \in \Theta(\sqrt{n})$, and $\gm(S_n \boxtimes T) \in \Theta(\sqrt{n})$  by \cref{lower_bound} %\david{should this be $\Theta(\sqrt{n})$?}. 
% The second inequality is tight for the product of two paths. Specifically, it is well-known that $\gm(P_n \boxtimes P_n) \in \Theta(n)$ and $\tw(P_n \boxtimes P_n) \in \Theta(n)$. \david{It would be simpler to use two paths for the first two inequalities. Obviously, $\gm(P_n\boxprod P_n)=\gm(P_n\boxtimes P_n)=n$, and $\tw(P_n\boxtimes P_n)<2n$ by 
% \eqref{TreewidthProductTrees}.
 The last inequality is tight for $S_n$ and $P_n$ since $\tw(S_n \boxtimes P_n) \in \Theta(n)$ by \eqref{LowerBoundProduct} and \cref{EasyUpperBound}, and $\gm(S_n \boxprod P_n) \in \Theta(\sqrt{n})$ by \cref{lower_bound,StarTreeUpperBound} below. 

% \pat{Can we strengthen \cref{quadratic_grid_minor} so that it applies to any graphs $G_1$ and $G_2$?  Answer: Not without improving the grid-minor theorem.  The proof of \cref{quadratic_grid_minor} does prove that $\tw(G\boxtimes T)\in O(\gm(G\boxtimes T))$ when $G$ is any graph, $T$ is a tree, and $|V(G)|\le |V(T)|$. More generally, it works when the bigger graph, $G_2$, has bounded treewidth.} 

\section{Upper Bound}\label{C}

This section proves upper bounds of the form, $\gm(G)\in O(\sqrt{n})$, where $G$ is the product of various $n$-vertex graphs, as mentioned in \cref{A}.

\begin{lem}
\label{StarGraph}
Fix numbers $\Delta\geq c>0$. Let $\GG$ be a  graph class closed under minors and disjoint unions, such that $|E(H)|<c|V(H)|$ for every graph $H\in\GG$. Let $S$ be any star and $H$ be any graph in $\GG$. Let $G$ be any graph with maximum degree $\Delta$ that is a minor of $S \bigcdot H$. Then 
$$|E(G)|< c|V(G)|+(\Delta-c)\,|V(H)|.$$ 
\end{lem}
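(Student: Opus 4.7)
The plan is to find a small ``core'' set $U\subseteq V(G)$ such that $G-U$ lies in $\mathcal{G}$; the inequality then falls out of combining the sparsity bound $|E(G-U)|<c|V(G)\setminus U|$ with a trivial degree bound on the edges of $G$ incident to $U$.

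To find $U$, I would first unpack the structure of the lexicographic product. Let $r$ be the centre of $S$ and let $\ell_1,\dots,\ell_k$ be its leaves; then the vertex set of $S\cdot H$ partitions into a ``central copy'' $C:=\{r\}\times V(H)$ and ``leaf copies'' $L_i:=\{\ell_i\}\times V(H)$, each isomorphic to $H$, and no edge of $S\cdot H$ joins $L_i$ to $L_j$ for $i\neq j$. Fix a model $\{B_x:x\in V(G)\}$ of $G$ in $S\cdot H$. Since each $B_x$ is connected, every $B_x$ either meets $C$ or lies entirely inside a single leaf copy $L_i$. Accordingly I would set
\[
  U:=\{x\in V(G):B_x\cap C\neq\emptyset\}.
\]
Because the branch sets are pairwise disjoint and each $x\in U$ contributes at least one vertex to $C$, we have $|U|\leq|C|=|V(H)|$.

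The next step is to verify that $G-U\in\mathcal{G}$. Writing $V_i:=\{x:B_x\subseteq L_i\}$, the vertex set $V(G)\setminus U$ partitions as $\bigsqcup_i V_i$, and since no edges of $S\cdot H$ go between $L_i$ and $L_j$ for $i\neq j$, no edges of $G$ go between $V_i$ and $V_j$. Thus $G-U=\bigsqcup_i G[V_i]$, and for each $i$ the restricted branch sets $\{B_x:x\in V_i\}$ form a model of $G[V_i]$ in $L_i\cong H$, so $G[V_i]\preceq H$. By closure of $\mathcal{G}$ under minors and disjoint unions we get $G-U\in\mathcal{G}$. The claimed bound then follows by combining $|E(G-U)|<c|V(G)\setminus U|$ with the observation that the edges of $G$ incident to $U$ number at most $\sum_{x\in U}\deg_G(x)\leq\Delta|U|$, giving
\[
  |E(G)|<c(|V(G)|-|U|)+\Delta|U|=c|V(G)|+(\Delta-c)|U|\leq c|V(G)|+(\Delta-c)|V(H)|,
\]
using $\Delta\geq c$ and $|U|\leq|V(H)|$ in the last step.

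I do not anticipate a serious obstacle; the only mild subtlety is the degenerate case where $V(G)\setminus U$ is empty, so that $G-U$ is the null graph and the strict sparsity inequality $|E(G-U)|<c|V(G)\setminus U|$ reduces to $0<0$. I would dispatch this case directly: then $|V(G)|=|U|\leq|V(H)|$ and $|E(G)|\leq\tfrac{1}{2}\Delta|V(G)|$, which is already strictly below $c|V(G)|+(\Delta-c)|V(H)|$.
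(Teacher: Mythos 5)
Your proof is correct and follows essentially the same approach as the paper: partition $V(G)$ into the vertices whose branch sets meet the central copy (your $U$, the paper's $R$) and the rest, observe that the latter part is a minor of a disjoint union of copies of $H$ hence lies in $\mathcal{G}$, and combine the sparsity bound with the degree bound on edges incident to $U$. Your explicit handling of the degenerate case $V(G)\setminus U=\emptyset$ is a small extra care that the paper elides, but the core argument is identical.
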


\begin{proof}
Let
$(B_x:x\in V(G))$ be a model of $G$ in $S \bigcdot H$. Let $r$ be the root of $S$. Let $R$ be the set of vertices $x$ of $G$ such that $(r,b)\in V(B_x)$ for some $b\in V(H)$. 
Let $Q$ be the set of vertices $x$ of $G$ such that $V(B_x)\subseteq \{(v,b): v\in V(S-r),b\in V(H)\}$. 
Thus $\{R,Q\}$ is a partition of $V(G)$. Moreover, $G[Q]$ is a minor of the disjoint union of $n$ copies of $H$, implying $G[Q]\in\GG$ and $|E(G[Q])|<c|Q|$. The number of edges of $G$ incident to $R$ is at most $\Delta|R|$. 
Thus $|E(G)| < c|Q| + \Delta|R| 
= c(|V(G)|-|R|)+ \Delta\,|R|
= c\,|V(G)|+ (\Delta-c)\,|R|
\leq c|V(G)|+(\Delta-c)|V(H)|$.
\end{proof}

The class of graphs with treewidth at most $t$ is closed under minors and disjoint unions, and $|E(H)|<t\,|V(H)|$ for every graph $H$ with treewidth at most $t$. \cref{StarGraph} implies:

\begin{cor}
\label{StarTreewidth}
Fix numbers $\Delta\geq t\geq 1$. Let $S$ be any star and $H$ be any graph with treewidth at most $t$. Let $G$ be any graph with maximum degree $\Delta$ that is a minor of $S \bigcdot H$. Then 
$$|E(G)|< t|V(G)|+(\Delta-t)\,|V(H)|.$$ 
\end{cor}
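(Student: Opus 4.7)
The plan is essentially to invoke \cref{StarGraph} directly, so the only real work is verifying its hypotheses with the right choice of $\mathcal{G}$ and $c$.

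Let $\mathcal{G}$ denote the class of all graphs of treewidth at most $t$, and set $c:=t$. I would first check the three hypotheses of \cref{StarGraph}. Closure under minors is a standard property of treewidth (contracting or deleting edges/vertices cannot increase the width of any tree-decomposition). Closure under disjoint unions is equally standard: given tree-decompositions of two graphs, one joins their decomposition trees by any edge to obtain a tree-decomposition of the disjoint union, whose width is the maximum of the two widths. Finally, any graph $H$ of treewidth at most $t$ is $t$-degenerate, because the bag-elimination ordering from a tree-decomposition removes each vertex after at most $t$ of its neighbours have been removed; this gives $|E(H)| \leq t\,|V(H)| - \binom{t+1}{2}$, and in particular $|E(H)| < t\,|V(H)|$, which is the bound required with $c=t$.

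With the hypotheses verified, I would then note that $S$ is a star and $H \in \mathcal{G}$ by assumption, and $G$ is a graph of maximum degree $\Delta$ that is a minor of $S \cdot H$, with $\Delta \geq c = t > 0$. \cref{StarGraph} then immediately yields
\[
|E(G)| < c\,|V(G)| + (\Delta - c)\,|V(H)| = t\,|V(G)| + (\Delta - t)\,|V(H)|,
\]
as required.

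There is no real obstacle here; the corollary is a specialization of \cref{StarGraph} and the only thing to be careful about is citing the right property of treewidth (degeneracy) to obtain the edge bound $|E(H)| < t\,|V(H)|$ uniformly over the class. I would present this in two or three sentences as a direct corollary.
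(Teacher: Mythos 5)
Your proof is correct and is exactly the paper's proof: instantiate \cref{StarGraph} with $\mathcal{G}$ the class of graphs of treewidth at most $t$ and $c=t$, after checking closure under minors and disjoint unions and the edge bound $|E(H)|<t\,|V(H)|$. One small slip: the intermediate bound $|E(H)|\le t\,|V(H)|-\binom{t+1}{2}$ for $t$-degenerate $H$ fails when $|V(H)|\le t$ (e.g.\ a single vertex), though the conclusion $|E(H)|<t\,|V(H)|$ you actually need is still true in that range since $|E(H)|\le\binom{|V(H)|}{2}<t\,|V(H)|$.
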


The next result completes the proof of the second part of our main theorem stated in \cref{Intro}, showing that the lower bound in \cref{lower_bound} is optimal.

\begin{thm}
\label{StarTreeUpperBound}
For any star $S$ and any $n$-vertex tree $T$, 
$$\gm(S\boxprod T) \leq \gm(S \boxtimes T) \leq \gm(S\bigcdot T) <\sqrt{3n+1}+1.$$
\end{thm}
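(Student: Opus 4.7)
The first two inequalities are immediate from the chain of subgraph containments $S \boxprod T \subseteq S \boxtimes T \subseteq S \cdot T$, since grid minors are monotone under taking subgraphs. So the real work is showing $\gm(S \cdot T) < \sqrt{3n+1}+1$, and the plan is simply to apply \cref{StarTreewidth} to the grid itself.

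The parameters are forced on us. A tree $T$ has treewidth at most $1$, so we use $t = 1$. The grid $\boxplus_k$ has maximum degree at most $4$, so we set $\Delta = 4$ (which satisfies $\Delta \geq t$). If $\boxplus_k \preceq S \cdot T$, then \cref{StarTreewidth} with $G = \boxplus_k$, $H = T$ yields
$$|E(\boxplus_k)| < |V(\boxplus_k)| + 3\,|V(T)|.$$
Substituting $|E(\boxplus_k)| = 2k(k-1)$, $|V(\boxplus_k)| = k^2$, and $|V(T)| = n$ gives
$$2k^2 - 2k < k^2 + 3n,$$
which rearranges to $k^2 - 2k - 3n < 0$. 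Solving the quadratic yields $k < 1 + \sqrt{1 + 3n} = \sqrt{3n+1}+1$, exactly the claimed bound.

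The one subtlety I would check is the boundary case: the bound $\Delta = 4$ on the maximum degree of $\boxplus_k$ only holds for $k \geq 3$. For $k \in \{1,2\}$ the grid has fewer edges and smaller maximum degree, so the conclusion is trivially true once one notes that $\sqrt{3n+1}+1 \geq \sqrt{4}+1 = 3 > k$ for $n \geq 1$. No separate case analysis is really needed since these small values of $k$ satisfy the bound automatically.

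I do not anticipate any substantive obstacles: the entire argument is a direct numerical application of \cref{StarTreewidth}, with the only decisions being the choice of $t = 1$ (forced by $T$ being a tree) and $\Delta = 4$ (forced by the grid). The clean form of the final bound $\sqrt{3n+1}+1$ suggests that the constants in \cref{StarTreewidth} were in fact tuned precisely to make this application come out neatly.
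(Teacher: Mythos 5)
Your proposal is correct and follows essentially the same route as the paper: both apply \cref{StarTreewidth} with $t=1$, $\Delta=4$, $G=\boxplus_k$, $H=T$, plug in $|E(\boxplus_k)|=2k(k-1)$ and $|V(\boxplus_k)|=k^2$, and solve the resulting quadratic $k^2-2k<3n$. Your extra remark about the $k\in\{1,2\}$ boundary cases is a reasonable sanity check but not needed, since \cref{StarTreewidth} only requires the maximum degree to be at most $\Delta$.
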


\begin{proof}
The first two inequalities hold by definition. Let $k:=\gm(S\bigcdot T)$. Now apply \cref{StarTreewidth} with $t=1$, and with $G:=\boxplus_k$ and $\Delta=4$. Thus
$$2k(k-1)  = |E(G)| < |V(G)| + (\Delta-1)|V(T)|
= k^2 + 3 n.
$$ 
Thus $k^2 -2k < 3n$ and $k<\sqrt{3n+1}+1$.
\end{proof}

We now show that results like \cref{StarTreeUpperBound} can be concluded from results in the literature. (We include the above proofs for the sake of completeness and since \cref{StarGraph,StarTreewidth} are of independent interest.) 

A set $S$ of vertices in a graph $G$ is a \defn{feedback vertex set} if $G-S$ is a forest. \citet{Luccio98} proved that the minimum size of a feedback vertex set in $\boxplus_k$ is $(\tfrac13+o(1))k^2$. If $\boxplus_k$ is a minor of $S\bigcdot T$ with $|V(T)|=n$, then $\boxplus_k$ has a feedback vertex set of size at most $n$ (consisting of the vertices of $\boxplus_k$ whose branch sets intersect the copy of $T$ corresponding to the root of $S$). Thus $n\geq (\tfrac13+o(1))k^2$ and $k\leq (1+o(1))\sqrt{3n}$, which implies \cref{StarTreeUpperBound}.

A result similar to \cref{StarTreeUpperBound} can also be concluded from a more general result of~\citet[Theorem~3]{Eppstein14}, who proved that if $n>k/2$ and any set of $n$ vertices are deleted from $\boxplus_k$, then the remaining graph contains a $\boxplus_\ell$ minor, where $\ell\geq \frac{k^2}{4n}-1$. Say $k=\gm(S\bigcdot T)$ where $S$ is a star and $T$ is any $n$-vertex tree. By the definition of $S\bigcdot T$, a set of at most $n$ vertices can be deleted from $\boxplus_k$ (corresponding to branch sets that intersect the copy of $T$ at the root of $S$) so that each component of the remaining graph is a tree (a minor of $T$), and thus contains no $\boxplus_2$ minor. 
Note that $k \leq \tw(S\bigcdot T)\leq 2n-1$, so Eppstein's result is applicable. Hence  $1 \geq \ell\geq \frac{k^2}{4n}-1$, implying $k\leq\sqrt{8n}$. Eppstein's result is substantially stronger than \cref{StarTreeUpperBound}. For example, it implies: 

\begin{thm}
\label{StarGraphUpperBound}
For any star $S$ and any $n$-vertex graph $H$, 
$$\gm(S\bigcdot H)\leq 2\sqrt{n( \tw(H) +1)}.$$
\end{thm}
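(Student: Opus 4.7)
The plan is to apply Eppstein's theorem in the same way the paper uses it to derive \cref{StarTreeUpperBound}, but replacing the observation ``a forest has no $\boxplus_2$ minor'' by the more general bound ``a graph of treewidth $t$ has no $\boxplus_{t+1}$ minor''. Let $k:=\gm(S\cdot H)$ and fix a model $(B_x:x\in V(\boxplus_k))$ of $\boxplus_k$ in $S\cdot H$. Let $r$ be the centre of $S$, and set $R:=\{x\in V(\boxplus_k):B_x\cap(\{r\}\times V(H))\ne\emptyset\}$. Since the branch sets are pairwise disjoint, $|R|\leq |V(H)|=n$.

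The first key step is to bound $\gm(\boxplus_k-R)$. The restriction of the given model to $V(\boxplus_k)\setminus R$ is a model of $\boxplus_k-R$ in $(S-r)\cdot H$, which is a disjoint union of copies of $H$ and hence has treewidth $\tw(H)$. By the minor-monotonicity of treewidth combined with \cref{tw_gte_gm}, this gives $\gm(\boxplus_k-R)\leq\tw(\boxplus_k-R)\leq\tw(H)$.

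The second step is to apply Eppstein's theorem. Its hypothesis $n>k/2$ is verified via \cref{EasyUpperBound} with $G_1:=S$ (which has treewidth $1$): $k\leq\tw(S\cdot H)\leq 2n-1<2n$. If $k^2<n$ then the desired inequality is already trivial, so assume $k^2\geq n$; then we may choose an $n$-element superset of $R$ inside $V(\boxplus_k)$ and delete it from $\boxplus_k$. Eppstein's theorem produces a $\boxplus_\ell$ minor of the resulting graph with $\ell\geq k^2/(4n)-1$, and since that graph is a subgraph of $\boxplus_k-R$, this minor lives in $\boxplus_k-R$ as well.

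Combining the two steps gives $k^2/(4n)-1\leq\ell\leq\tw(H)$, and rearranging yields $k\leq 2\sqrt{n(\tw(H)+1)}$, as required. I do not expect any genuine obstacle: the argument is essentially the sketch already given in the paragraph immediately preceding the theorem, with the trivial grid-minor bound for forests replaced by the minor-monotonicity of treewidth applied to a disjoint union of copies of $H$. The only bit of bookkeeping is handling the case $|R|<n$, which is resolved either by the trivial-bound shortcut when $k^2<n$, or by padding $R$ to size $n$ when $k^2\geq n$.
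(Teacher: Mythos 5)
Your proof is correct and takes essentially the same approach the paper intends: the paper states \cref{StarGraphUpperBound} without proof as a direct consequence of Eppstein's theorem, in the same way it re-derives \cref{StarTreeUpperBound}, and your argument simply carries out that application with the forest bound replaced by $\gm(\boxplus_k-R)\le\tw(\boxplus_k-R)\le\tw(H)$. The bookkeeping for $|R|<n$ (pad $R$ to size $n$ when $k^2\ge n$, or fall back to the trivial bound when $k^2<n$) is handled cleanly.
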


\cref{StarGraphUpperBound} can be generalised to allow for arbitrary trees with bounded radius.

\begin{thm}
\label{TreeGraphUpperBound}
For any tree $T$ with radius $r$ and any $n$-vertex graph $H$ with $E(H)\neq\emptyset$, 
$$\gm(T\bigcdot H)\leq 5 n^{1-1/2^r} \tw(H)^{1/2^r}.$$
\end{thm}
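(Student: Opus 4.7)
My plan is to prove this by induction on $r$, paralleling the Eppstein-based derivation of \cref{StarGraphUpperBound}. For the base case $r=1$, $T$ is a star, so \cref{StarGraphUpperBound} gives $\gm(T \cdot H) \leq 2\sqrt{n(\tw(H)+1)}$; since $\tw(H)\geq 1$ (because $E(H)\neq\emptyset$), this is at most $2\sqrt{2n\tw(H)} < 5\,n^{1/2}\tw(H)^{1/2}$, as required.

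For the inductive step ($r\geq 2$), let $v$ be a central vertex of $T$, and let $T_1,\ldots,T_d$ be the connected components of $T-v$. A standard argument shows each $T_i$ has radius at most $r-1$ (its root, the neighbour of $v$ in $T_i$, has eccentricity at most $r-1$). Set $X:=\{v\}\times V(H)$, so $|X|=n$ and $(T\cdot H)-X = \bigsqcup_i (T_i\cdot H)$, since in a lexicographic product any edge between distinct $T_i$s passes through~$v$. Let $k:=\gm(T\cdot H)$ and fix a model of $\boxplus_k$ in $T\cdot H$. At most $n$ of its branch sets meet $X$; let $Y\subseteq V(\boxplus_k)$ be the corresponding set of at most $n$ vertices. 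By \citet[Theorem~3]{Eppstein14}, $\boxplus_k-Y$ contains $\boxplus_\ell$ as a minor with $\ell\geq k^2/(4n)-1$ (handling the corner case $|Y|\leq k/2$ by padding $Y$ with arbitrary extra vertices to size $\lceil k/2\rceil+1$ only weakens the bound by a constant). Composing with the original model produces a model of $\boxplus_\ell$ in $T\cdot H$ that uses only branch sets disjoint from $X$, giving $\boxplus_\ell\preceq (T\cdot H)-X$; since $\boxplus_\ell$ is connected, $\boxplus_\ell\preceq T_i\cdot H$ for some $i$.

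Applying the inductive hypothesis to $T_i$ yields $\ell\leq 5\,n^{1-1/2^{r-1}}\tw(H)^{1/2^{r-1}}$, and combined with $\ell\geq k^2/(4n)-1$ this gives
\[
k^2 \;\leq\; 20\,n^{2-1/2^{r-1}}\tw(H)^{1/2^{r-1}} + 4n \;=\; 20\,n^{2(1-1/2^r)}\tw(H)^{2/2^r} + 4n.
\]
For $r\geq 2$ the exponent $2(1-1/2^r)\geq 3/2>1$, so (using $n,\tw(H)\geq 1$) the $4n$ term is bounded by $5\,n^{2(1-1/2^r)}\tw(H)^{2/2^r}$, and the whole right-hand side is at most $25\,n^{2(1-1/2^r)}\tw(H)^{2/2^r}$. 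Taking square roots yields $k\leq 5\,n^{1-1/2^r}\tw(H)^{1/2^r}$, closing the induction. The main obstacle I anticipate is the edge case in Eppstein's theorem when $|Y|\leq k/2$, which as above is resolved by padding; a secondary point is arranging the constants so that both the leading $5$ and the absorbed $+4n$ term survive each step of the induction, which they do with room to spare since $\sqrt{24}<5$.
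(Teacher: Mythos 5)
Your proof takes essentially the same route as the paper's: induction on $r$, decompose $T$ at a central vertex $v$ so that the components of $T-v$ have radius at most $r-1$, delete the $n$ vertices of $\boxplus_k$ whose branch sets meet the copy of $H$ at $v$, and invoke Eppstein's deletion theorem to pass the grid down into some $T_i\cdot H$. The constants close the same way ($\sqrt{24}<5$). The one minor difference is the base case: the paper anchors the induction at $r=0$ (where $T=K_1$ and $\gm(H)\le\tw(H)$), whereas you start at $r=1$ via \cref{StarGraphUpperBound}. As written this leaves a small gap, since in the inductive step a component $T_i$ of $T-v$ may be a single vertex (radius $0$), which your inductive hypothesis does not cover; adding the trivial $r=0$ case, as the paper does, repairs this immediately.
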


\begin{proof}
We proceed by induction on $r\geq 0$ (with $H$ fixed). In the $r=0$ case, $T=K_1$ and
$$\gm(T\bigcdot H)\leq \tw(H) = n^{1-1/2^r} \tw(H),$$ as desired. 
Now assume $r\geq 1$ and the result holds for $r-1$. Let $T$ be any tree $T$ with radius $r$. Let $G:=T\bigcdot H$ and let $k:=\gm(G)$. Let $v$ the centre of $T$. Let $T_1,\dots,T_p$ be the components of $T-v$. Let $v_i$ be the neighbour of $v$ in $T_i$. So $v_i$ has eccentricity at most $r-1$ in $T_i$, and $T_i$ has radius at most $r-1$. Let $k_i:=\gm(T_i\bigcdot H)$. By induction, for each $i\in\{1,\dots,p\}$,
$$\gm(T_i\bigcdot H)\leq 5 n^{1-1/2^{r-1}} \tw(H)^{1/2^{r-1}}.$$
By the definition of $T\bigcdot H$, there is a set $X$ of at most $n$ vertices in $G$ (corresponding to branch sets that intersect the copy of $H$ at $v$) such that each component of $G-X$ is a minor of some $T_i\bigcdot H$. 
By Eppstein's result (which is applicable since $k\leq \tw(G) \leq 2n-1$), 
$$ \frac{k^2}{4n}-1 \leq \gm(G-X) \leq 
\max_i \gm(T_i\bigcdot H) \leq 5 n^{1-1/2^{r-1}} \tw(H)^{1/2^{r-1}}.$$
Since $n\geq 1$ and $\tw(H)\geq 1$, 
$$ \frac{k^2}{4n} \leq 6 n^{1-1/2^{r-1}} \tw(H)^{1/2^{r-1}}$$
and
$$ k \leq 
\sqrt{24} n^{1-1/2^r} \tw(H)^{1/2^r}
< 5 n^{1-1/2^r} \tw(H)^{1/2^r}
.$$
The result follows.
\end{proof}

\section{Product of Stars and Trees}\label{D}

Given the importance of products of stars and trees in the previous section, we  now consider the following natural question: 
What is the least constant $c$ such that 
for any star $S$ and any $n$-vertex tree $T$,
\begin{equation}
\label{StarTreeQuestion}
\gm(S\boxprod T) \leq(1 +o(1))\sqrt{c n}\,?
\end{equation}
Analogous questions are interesting for $S\boxtimes T$ and $S\bigcdot T$. 
\cref{StarTreeUpperBound} gives an upper bound of $c \leq 3$ for $S\boxprod T$, $S\boxtimes T$ or $S\bigcdot T$.
For Cartesian products, we now show that $c =2$ is the answer. 

\begin{lem}
\label{BipartiteG}
    Let $G$ be a bipartite graph with bipartition $\{A,B\}$. 
    Let $S$ be a star with at least $|A|$ leaves. 
    Let $T$ be any tree with at least $|B|$ vertices. 
    Then $G$ is a minor of $S\boxprod T$.
\end{lem}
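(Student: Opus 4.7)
The plan is to exhibit an explicit minor model of $G$ in $S \boxprod T$. Let $r$ be the root of $S$ and let $\ell_1,\ldots,\ell_k$ be the leaves of $S$, where $k \geq |A|$. Enumerate $A =: \{a_1,\ldots,a_{|A|}\}$ and $B =: \{b_1,\ldots,b_{|B|}\}$, and choose any $|B|$ distinct vertices $v_1,\ldots,v_{|B|}$ of $T$ (possible since $|V(T)| \geq |B|$).

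For each $a_i \in A$, define the branch set
\[
B_{a_i} := \{\ell_i\} \times V(T),
\]
which is the entire ``copy'' of $T$ sitting above the leaf $\ell_i$. For each $b_j \in B$, define the singleton branch set
\[
B_{b_j} := \{(r, v_j)\}.
\]

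I then verify the three defining properties of a model. Disjointness is immediate: the sets $B_{a_i}$ use pairwise distinct first coordinates $\ell_i$, the sets $B_{b_j}$ use pairwise distinct second coordinates $v_j$ (with common first coordinate $r$), and the two families are disjoint from each other since $r \neq \ell_i$ for any $i$. Connectedness is also easy: $(S \boxprod T)[B_{a_i}]$ is isomorphic to $T$ and hence connected, while each $B_{b_j}$ is a single vertex.

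The key step is checking adjacency. For each edge $a_i b_j \in E(G)$ (necessarily between $A$ and $B$, since $G$ is bipartite with bipartition $\{A,B\}$, so no intra-part edges need to be realised), the vertex $(\ell_i, v_j) \in B_{a_i}$ and the vertex $(r, v_j) \in B_{b_j}$ have equal second coordinate and their first coordinates $\ell_i$ and $r$ are adjacent in $S$; hence they are adjacent in $S \boxprod T$. This yields the required edge between $B_{a_i}$ and $B_{b_j}$, and completes the model. I do not anticipate any technical obstacle here, as the whole argument is just a direct verification of the definition of minor against the definition of the Cartesian product.
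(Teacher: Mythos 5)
Your proof is correct and matches the paper's argument exactly: both construct the same model, assigning to each vertex of $A$ a full leaf-copy $\{\ell_i\}\times V(T)$ and to each vertex of $B$ a singleton $\{(r,v_j)\}$ in the root-copy, then realising each edge via the pair $(\ell_i,v_j)(r,v_j)$. The only cosmetic difference is that you spell out the injections as explicit enumerations, which the paper denotes by $f$ and $g$.
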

\begin{proof}
Let $r$ be the root of $S$. 
Let $f$ be an injection from $A$ into the leaf-set of $S$. 
Let $g$ be an injection from $B$ into the vertex-set of $T$.
For each vertex $v\in A$, define the branch set $B_v:= \{f(v)\}\times V(T)$, which induces a connected copy of $T$ in $S\boxprod T$.
For each vertex $w\in B$, define the branch set $B_w := \{(r,g(w))\}$. For each edge $vw$ of $G$ with $v\in A$ and $w\in B$, the edge $(f(v),g(w))(r,g(w))$ of $S\boxprod T$ joins $B_v$ and $B_w$. Hence $(B_v:v\in V(G))$ is a model of $G$ in $S\boxprod T$. 
\end{proof}

\begin{cor}\label{uppercorr}
For any tree $T$ with $n$ vertices, and 
for any star $S$ with at least $n+1$ leaves, 
$$\gm(S\boxprod T)\geq \floor{\sqrt{2n}}.$$
\end{cor}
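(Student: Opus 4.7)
The plan is to apply \cref{BipartiteG} directly to the grid $\boxplus_k$, using the fact that the grid is bipartite with balanced colour classes.

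Set $k:=\floor{\sqrt{2n}}$, so $k^2\le 2n$. The goal is to show $\boxplus_k \preceq S\boxprod T$; the bound then follows from the definition of $\gm$. I would first observe that $\boxplus_k$ is bipartite: assign each vertex $(i,j)\in\{1,\ldots,k\}^2$ to colour class $A$ or $B$ according to the parity of $i+j$. The two classes have sizes $\ceil{k^2/2}$ and $\floor{k^2/2}$. Since $k^2\le 2n$, both satisfy $\ceil{k^2/2}\le n$, so in particular $|A|\le n+1$ and $|B|\le n$.

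Now apply \cref{BipartiteG} to the bipartite graph $G:=\boxplus_k$ with bipartition $\{A,B\}$. The star $S$ has at least $n+1\ge |A|$ leaves, and the tree $T$ has $n\ge |B|$ vertices, so the hypotheses of the lemma are met. The lemma then yields $\boxplus_k\preceq S\boxprod T$, and therefore $\gm(S\boxprod T)\ge k=\floor{\sqrt{2n}}$, as required.

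There is no real obstacle here: the only content beyond invoking \cref{BipartiteG} is the elementary observation that $\boxplus_k$ is bipartite with each colour class of size at most $\ceil{k^2/2}$, together with the arithmetic $k=\floor{\sqrt{2n}}\Rightarrow \ceil{k^2/2}\le n$. The proof is therefore just a short application of the preceding lemma.
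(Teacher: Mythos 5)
Your proof is correct and essentially identical to the paper's: both set $k:=\floor{\sqrt{2n}}$, observe that the natural bipartition of $\boxplus_k$ has parts of sizes $\ceil{k^2/2}$ and $\floor{k^2/2}$, check that these are at most $n+1$ and $n$ respectively, and apply \cref{BipartiteG}.
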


\begin{proof}
Let $k:= \floor{\sqrt{2n}}$. 
Let $\{A,B\}$ be the bipartition of $\boxplus_k$, where $|A|=\ceil{\frac{k^2}{2}}$ and $|B|=\floor{\frac{k^2}{2}}$. 
So $S$ has at least $n+1\geq \frac{k^2+1}{2} \geq |A|$ leaves, and  $T$ has at least $n\geq \frac{k^2}{2} \geq |B|$ vertices. By \cref{BipartiteG}, $\boxplus_k$ is a minor of $S\boxprod T$, and $\gm(S\boxprod T)\geq k$. 
\end{proof}

\begin{lem}\label{star_times_tree_cartesian}
  For any star $S$ and any $n$-vertex tree $T$, 
  $$\gm(S\boxprod T)\le \sqrt{2n}+1.$$
\end{lem}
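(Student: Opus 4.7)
The plan is to fix a model $\mathcal{M} = \{B_x : x \in V(\boxplus_k)\}$ of $\boxplus_k$ in $S \boxprod T$, where $k := \gm(S \boxprod T)$ and $r$ denotes the root of $S$. I define the \emph{central} branch sets
\[
X := \{x \in V(\boxplus_k) : B_x \cap (\{r\} \times V(T)) \neq \emptyset\}.
\]
Since the branch sets $B_x$ are pairwise disjoint and $|\{r\} \times V(T)| = n$, this immediately gives $|X| \leq n$.

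Next I would analyse the complement $V(\boxplus_k) \setminus X$. For each $y \notin X$, the projection of the connected subgraph $B_y$ to $S$ is a connected subgraph of the star $S$ avoiding $r$, hence a single leaf $\ell_{i(y)}$; so $B_y \subseteq \{\ell_{i(y)}\} \times V(T)$. Since $S \boxprod T$ has no edges between distinct leaf copies, two vertices $y, y' \notin X$ are adjacent in $\boxplus_k$ only when $i(y) = i(y')$. For each leaf $\ell_i$, the branch sets assigned to $\ell_i$ form a minor of $T$ inside $\{\ell_i\} \times V(T)$, so the induced subgraph of $\boxplus_k$ on the vertices $\{y : i(y) = i\}$ is a forest with at most $n$ vertices. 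Combining, $\boxplus_k - X$ is a forest in which every connected component has at most $n$ vertices.

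The remaining step---the main obstacle---is to deduce $k \leq \sqrt{2n} + 1$ from (i) $|X| \leq n$ and (ii) every component of $\boxplus_k - X$ is a tree with at most $n$ vertices. A direct edge-count using $|E(\boxplus_k)| = 2k(k-1)$, the forest bound $|E(\boxplus_k - X)| \leq |V(\boxplus_k) \setminus X| - q$ (where $q$ is the number of components), and $\sum_{v \in X} \deg_{\boxplus_k}(v) \leq 4|X|$ yields only $(k-1)^2 \leq 3|X|$, merely recovering the weaker $\sqrt{3n}+1$ bound of \cref{StarTreeUpperBound}. To sharpen the constant, my plan is to exploit the bipartite $2$-coloring of $\boxplus_k$---Cartesian products of bipartite graphs are bipartite, unlike strong or lexicographic products---together with the lower bound $q \geq (k^2 - |X|)/n$ forced by (ii). The extremal construction in \cref{BipartiteG} realises one entire color class of $\boxplus_k$ (of size roughly $k^2/2$) as central branch sets, which suggests any model needs roughly $k^2/2$ such sets. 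Formalising this by splitting $X$ along the bipartition $(A,B)$ of $\boxplus_k$ and accounting edges color-class by color-class should upgrade the factor $3$ to $2$, giving $(k-1)^2 \leq 2|X|$ up to lower-order terms; combined with $|X| \leq n$, this yields $k \leq \sqrt{2n}+1$.
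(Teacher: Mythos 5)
Your setup matches the paper's beginning: you correctly identify the set $X$ of grid vertices whose branch sets touch the root copy $R_0:=\{r\}\times V(T)$, observe $|X|\le n$, and note that $\boxplus_k-X$ is a forest whose components each embed in a single leaf copy of $T$. The paper establishes exactly these same basic facts.

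However, the remaining plan has a fatal gap, and it is not merely a matter of working out details. You propose to derive $(k-1)^2\le 2|X|$ from (i) $|X|\le n$ and (ii) each component of $\boxplus_k-X$ being a small tree, by exploiting the bipartition of $\boxplus_k$. This cannot succeed: condition (ii) says precisely that $X$ is a feedback vertex set of $\boxplus_k$ (plus a component-size bound that doesn't bite in the relevant regime), and the minimum feedback vertex set of $\boxplus_k$ has size only $(\tfrac13+o(1))k^2$ (Luccio, cited in Section~5 of the paper). So from (i) and (ii) alone, the best one can conclude is $k\le(1+o(1))\sqrt{3n}$---exactly the weaker bound you already recovered by edge-counting, and no amount of bipartite bookkeeping can improve it, since the constraints don't distinguish the bipartite classes. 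Moreover, the intuition that ``the extremal construction uses one whole colour class'' is misleading: that construction is extremal for \emph{producing} a grid minor, not extremal for the feedback-vertex-set relaxation you've reduced to.

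The paper's proof uses strictly more information than ``$X$ is a feedback vertex set.'' Its key lemma is that for every $4$-cycle $F$ of $\boxplus_k$, the branch sets of the four vertices of $F$ must collectively contain at least \emph{two} vertices of $R_0$, i.e.\ $\bigl|\bigcup_{x\in V(F)}B_x\cap R_0\bigr|\ge 2$. The non-obvious case is $|V(F)\cap X|=1$, say $F=xx_1x_2x_3$ with only $x\in X$: then $B_{x_1},B_{x_2},B_{x_3}$ all live in a single leaf copy $T_i$, $B_{x_2}$ separates $B_{x_1}$ from $B_{x_3}$ inside $T_i$, and $B_x$ must contain a path from a neighbour of $B_{x_1}$ to a neighbour of $B_{x_3}$ that avoids $B_{x_2}$. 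In the Cartesian product, any such detour must pass through $R_0$ at least twice (this is exactly where the argument fails for the strong product, and indeed where the constant for $\boxtimes$ is genuinely different---see \cref{star_times_tree_strong}). A double count---each vertex of $R_0$ lies in at most one branch set, and each grid vertex lies on at most four $4$-cycles---then yields $4n\ge 2(k-1)^2$. Your proposal tracks only the \emph{number} of branch sets meeting $R_0$, not the \emph{total number of $R_0$-vertices} they consume; that is the missing resolution.
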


\begin{proof}
  Let $\mathcal{M}:=\{B_x:x\in V(\boxplus_k)\}$ be a model of $\boxplus_k$ in $G:=S\boxprod T$.  Let $v_0,\ldots,v_\mu$ denote the vertices of $S$, where $v_0$ has degree $\mu$ and $v_1,\ldots,v_\mu$ are leaves.  For each $i\in\{0,\ldots,\mu\}$ let $R_i:=\{v_i\}\boxprod V(T)$, so that $T_i:=G[R_i]$ is isomorphic to $T$.  The idea behind the rest of this proof is that the vertices in $R_0$ are special because each vertex in $R_0$ is adjacent to the corresponding vertex in each of $T_1,\ldots,T_\mu$. This makes the vertices in $R_0$ a scarce resource that the model $\mathcal{M}$ must use efficiently.  Let $X_0:=\{x\in V(\boxplus_k):B_x\cap R_0\neq\emptyset\}$ be the set of vertices in $\boxplus_k$ whose branch sets intersect $R_0$.

%\referee{$X$ appears twice but is not defined} \david{fixed}

  Observe that $G-R_0$ is the vertex-disjoint union of trees $T_1,\ldots,T_\mu$.  Therefore, for each component $C$ of $\boxplus_k-X_0$, $G[\bigcup_{x\in V(C)} B_x]$ is a subtree of $T_i$ for some $i\in\{1,\ldots,\mu\}$.  Let $\mathcal{F}$ be the set of $4$-cycles in $\boxplus_k$. (So $\mathcal{F}$ is the set of inner faces in the usual plane drawing of $\boxplus_k$.)  Observe that, for each $F\in\mathcal{F}$, $|V(F)\cap X_0|\ge 1$ since otherwise $F$ would be a minor of $G-R_0$, which is not possible since $F$ is a cycle and $G-R_0$ is a forest.  
  
  Next we show that $|\bigcup_{x\in V(F)} B_x\cap R_0|\ge 2$ for each $F\in\mathcal{F}$.  If $|V(F)\cap X_0|\ge 2$, then this is immediate, so suppose that $|V(F)\cap X_0|=1$.  
  %Refer to \cref{star_times_tree_cartesian_fig}.  
  Let $F:=xx_1x_2x_3$ where $x\in X_0$ and $x_1,x_2,x_3 \in V(\boxplus_k)\setminus X_0$.  Since $G[\{x_1,x_2,x_3\}]$ is connected, there exists an $i\in\{1,\ldots,n\}$ such that $T'_j:=G[B_{x_j}]$ is a subtree of $T_i$ for each $j\in\{1,2,3\}$.  Furthermore, the subtree $T'_2$ is `between' $T'_1$ and $T'_3$, in the sense that $B_{x_1}$ and $B_{x_2}$ are in different components of $T_i-B_{x_2}$.  Since $N_{\boxplus_k}(x)$ contains both $x_1$ and $x_3$, $B_x$ contains vertices in $N_{G}(B_{x_1})$ and in  $N_{G}(B_{x_3})$.  Since $G[B_x]$ is connected, this implies that $G[B_x]$ contains a path from some vertex in  $N_{G}(B_{x_1})$ to some vertex in $N_{G}(B_{x_3})$.  Since $G[B_x]$ is a subgraph of $G-B_{x_2}$, this implies that any such path must contain at least two vertices of $R_0$.\footnote{This is true for the Cartesian product $S\boxprod T$, but not true for the strong product $S\boxtimes T$, since $N_{S\boxtimes T}(B_{x_1})$ and $N_{S\boxtimes T}(B_{x_3})$ can have a common vertex in $T_0$. This fact is used in the construction described in \cref{star_times_tree_strong}.}  Thus $|\bigcup_{x\in V(F)}B_x\cap R_0|=|B_x\cap R_0|\ge 2$, as claimed.

%\referee{Figure 8 is not much helpful; perhaps an expanded caption can help.} \david{I would delete the figure} \pat{Agreed}
%  \begin{figure}[ht]
%    \centering
%    \includegraphics{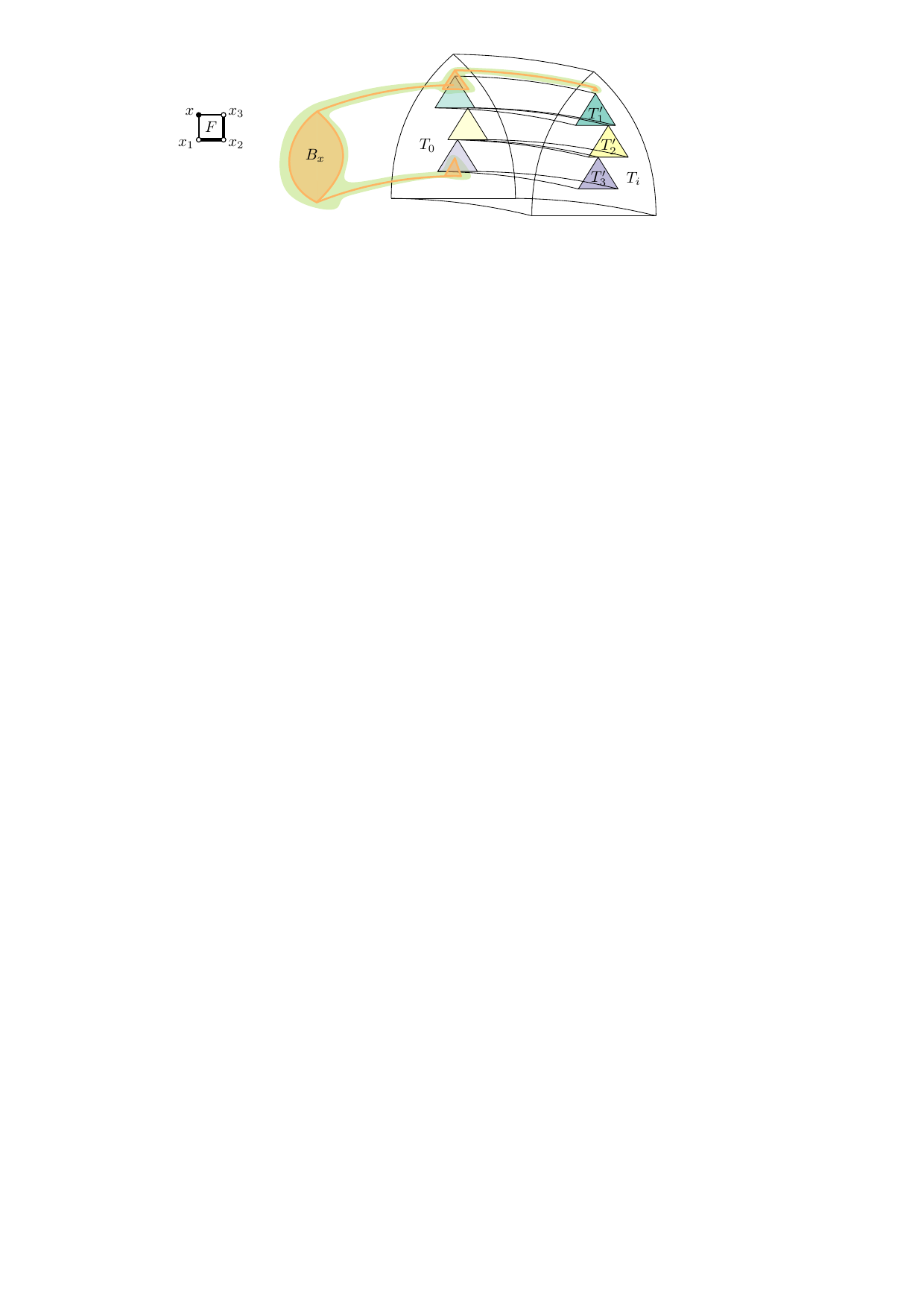}
%    \caption{The proof of \cref{star_times_tree_cartesian}.}
%    \label{star_times_tree_cartesian_fig}
%  \end{figure}

  Now we can finish the proof with
  \[
    \sum_{F\in\mathcal{F}}\sum_{x\in V(F)} |B_x\cap R_0| \ge \sum_{F\in\mathcal{F}} 2 = 2|\mathcal{F}| = 2(k-1)^2 \enspace .
  \]
  Each vertex in $R_0$ appears in $B_x$ for at most one $x\in V(\boxplus_k)$, and each such $x$ appears in $V(F)$ for at most four cycles $F\in\mathcal{F}$, so 
  \[
    \sum_{F\in\mathcal{F}}\sum_{x\in V(F)} |B_x\cap R_0| \le 4\sum_{x\in V(\boxplus_k)} |B_x\cap R_0| \le 4|R_0| = 4n \enspace .
  \]
  Combining the previous two equations gives $4n\ge 2(k-1)^2$, so $k \le \sqrt{2n}+1$.
\end{proof}

\cref{uppercorr,star_times_tree_cartesian} together show that $c=2$ in \eqref{StarTreeQuestion}. 

Now consider \eqref{StarTreeQuestion} for the strong product $S\boxtimes T$. We now show that the answers for Cartesian and strong products are different. In particular, for $S\boxtimes T$ the minimum $c$ in \eqref{StarTreeQuestion} satisfies $\frac52 \leq c \leq 3$. 

\begin{lem}\label{star_times_tree_strong}
  For any star $S$ with at least $\sqrt{10(n-2)}+1$ leaves and any path $P$ on at least $n$ vertices, $$\gm(S\boxtimes P)\ge \lfloor\sqrt{5(n-2)/2}\rfloor$$
\end{lem}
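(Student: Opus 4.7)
Let $k := \lfloor\sqrt{5(n-2)/2}\rfloor$. The goal is to construct a model of $\boxplus_k$ in $S \boxtimes P$, using the $2k+1$ leaves of $S$ guaranteed by the bound $\sqrt{10(n-2)}+1 \ge 2k+1$ and the $n$ vertices of $P$.

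The plan is to refine the bipartite embedding used in \cref{BipartiteG,uppercorr}, which in the Cartesian setting places one half of the vertices of $\boxplus_k$ as single root-row vertices, requires $\lfloor k^2/2\rfloor$ root positions, and therefore gives only the weaker bound $k\le\sqrt{2n}$. The crucial extra tool available in the strong product is that every root vertex $(r,p_j)$ is adjacent to each of $(v_\ell,p_{j-1})$, $(v_\ell,p_j)$, and $(v_\ell,p_{j+1})$ for every leaf~$v_\ell$, not only to $(v_\ell,p_j)$ as in the Cartesian case. A single root vertex can therefore serve as an ``adjacency hub'' realising grid-adjacencies for leaf branch sets at three consecutive path positions. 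By exploiting this width-$3$ window one can arrange the embedding so that only about $\tfrac{2}{5}k^2$ grid vertices (rather than $\tfrac{1}{2}k^2$) need to occupy the root row, which is exactly the saving that yields the improved $\sqrt{5n/2}$ bound.

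Concretely, I would lay out the $k\times k$ grid along $2k$ of the leaf rows (say $v_1,\ldots,v_{2k}$, perhaps paired as $(v_{2i-1},v_{2i})$ for the $i$th grid row), together with a carefully chosen set of at most $\lceil 2k^2/5\rceil$ root-row positions. Each branch set $B_{i,c}$ would contain one leaf vertex---chosen so that horizontal grid edges are realised by leaf-row adjacency within the appropriate pair---together with, for a prescribed $\tfrac{2}{5}$-fraction of the grid vertices, one additional root vertex whose width-$3$ window realises the vertical (and cross-pair) grid adjacencies at $B_{i,c}$. Verifying the model amounts to checking pairwise disjointness of the branch sets, connectedness of each, and the existence of an $S\boxtimes P$-edge between the two relevant branch sets for every grid edge. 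The main obstacle is the combinatorial design of the root-hub placement so that each hub reaches its maximum service capacity without any collision of root positions, and so that the total root-row usage stays within $n$; I expect this to require a periodic tiling pattern (determined by the parities of $i$ and $c$) together with boundary handling at the grid's corners and outermost rows and columns.
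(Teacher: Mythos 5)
Your proposal identifies the right numerical target (arrange the embedding so that only about $\tfrac{2}{5}k^2$ of the grid vertices consume root-row positions, which gives $n\approx \tfrac{2}{5}k^2$ and hence $k\approx\sqrt{5n/2}$), and your accounting of available leaves ($2k+1$) is correct. But what you have is a plan, not a proof: you explicitly defer ``the combinatorial design of the root-hub placement'' and ``boundary handling,'' and that design is precisely the content of the lemma. As written, the step that would actually construct the model of $\boxplus_k$ is missing.

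Moreover, the layout you sketch does not obviously work and is different from what the paper does. You propose to ``lay out the $k\times k$ grid along $2k$ of the leaf rows, paired as $(v_{2i-1},v_{2i})$ for the $i$th grid row,'' with root vertices acting as width-$3$ hubs. But in $S\boxtimes P$, two distinct leaf rows $\{v_a\}\times V(P)$ and $\{v_b\}\times V(P)$ have \emph{no} edges between them: every cross-leaf adjacency must pass through the root row $\{v_0\}\times V(P)$. If each grid row lives in its own pair of leaf rows, then every vertical grid edge (there are $(k-1)k$ of them) requires a root-row vertex in one of the two incident branch sets, and a single root vertex can be charged to at most a bounded number of such edges; making this budget come out to $2k^2/5$ rather than something closer to $k^2/2$ needs a delicate global pattern that you do not supply, and it is not clear such a pattern exists for a row-based layout. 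The paper sidesteps this by working with the \emph{diagonals} $D_i$ of slope $1$: each block of four consecutive diagonals $D_{5a},\dots,D_{5a+3}$ embeds as a subgraph into the $2$-row slab $\{v_0,v_1\}\boxtimes P_a'$ with the two \emph{outer} diagonals $D_{5a}\cup D_{5a+3}$ sitting on the root row and the two inner ones on the single leaf row $v_1$; the leftover diagonals $D_{5a+4}$ are then realised by larger branch sets (each spanning two consecutive blocks of $P$) in the remaining $2k$ leaf rows. An averaging argument over the $5$ residue classes of diagonals ensures the total $P$-length used is at most $2k^2/5+O(1)\le n$. The diagonal decomposition is the key structural idea that makes the $\tfrac{2}{5}$ bookkeeping go through; your ``width-$3$ window'' observation alone does not get you there.

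To repair the proposal you would need to (i) replace the row-pair layout by an explicit partition of $V(\boxplus_k)$ into pieces whose induced subgraphs embed as subgraphs into a two-row slab $\{v_0,v_1\}\boxtimes P'$, (ii) verify that the set of vertices forced onto the root row in this embedding has size at most $2k^2/5+O(1)$, and (iii) say exactly how the remaining vertices (and the grid edges incident to them) are realised using the other $2k$ leaves without collisions. Until those three things are done, the lemma is not proved.
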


\begin{proof}
  Let $k:=\lfloor\sqrt{5(n-2)/2}\rfloor$ and note that $n\ge 2k^2/5 + 2$.
  Let $G:=S\boxtimes P$. 
  Recall that $V(\boxplus_k)=\{1,\ldots,k\}^2$.  For each $i\in\{0,\ldots,2k-2\}$, let $D_i:=\{(x,y)\in V(\boxplus_k):y-x=k-i-1\}$,  and for convenience let $D_i=\emptyset$ for any $i\not\in\{0,\ldots,2k-2\}$. %\david{Not that it matters, but would it be cleaner to define $D_i:=\{(x,x-i)\in V(\boxplus_k):x\in[k]\}$ for $i\in\{1-k,\dots,k-1\}$? \pat{I tried, but then the indexing gets very confusing below.}} \david{okay}
  (The vertices of each $D_i$ are contained in a line of slope $1$.)  Let the vertices of $S$ be $v_0,\ldots,v_{2k+1}$, where $v_0$ is the vertex of degree $2k+1$ and  $v_1,\ldots,v_{2k+1}$ are the leaves.  
  % For each $i\in\{0,\ldots,2k+1\}$, let $P_i:=G[\{v_i\}\boxprod V(P)]$.

  For each $\ell\in\{0,\ldots,4\}$, let $\alpha_\ell:=|\bigcup_{a\in\mathbb{Z}} D_{\ell+5a+1}\cup D_{\ell+5a+2}|$.  Observe that $\sum_{\ell=0}^4 \alpha_\ell=2|V(\boxplus_k)|=2k^2$, since each diagonal $D_i$ contributes to this sum exactly twice (when $i-\ell\equiv 1\pmod 5$ and when $i-\ell\equiv 2\pmod 5$).  Therefore, $\alpha_\ell\le 2k^2/5$ for some $\ell\in\{0,\ldots,4\}$.  For the sake of brevity, assume that $\alpha_0\le 2k^2/5$.  (The only reason for this assumption is to avoid having to include an $\ell$ term in many of the subscripts in the following paragraphs.)
  
  % Let the vertices of $P$ be $w_1,\ldots,w_n$, in the order they appear along $P$.  

  We now explain how to embed a small (width-$4$) strip $B_a$ of $\boxplus_k$ into a small subgraph $S\subseteq G[\{v_0,v_1\}\times V(P)]$ of $G$.  This will allow us to decompose $\boxplus_k$ into disjoint independent strips and embed them all on $G[\{v_0,v_1\}\times V(P)]$.  We will then complete the embedding by embedding the remaining (width-$1$) strips into $G[\{v_2,\ldots,v_{k+1}\}\times V(P)\}]$.  Fix some integer $a\in\{0,\ldots,\lfloor 2k/5\rfloor-1\}$, and consider the induced subgraph $B_a:=\boxplus_k[D_{5a}\cup\cdots \cup D_{5a+3}]$.  Let $S':=S[\{v_0,v_1\}]$ and let $P_a'$ be any  subpath of $P$ with $|D_{5a+1}\cup D_{5a+2}|$ vertices.  As illustrated in \cref{fivehalves}, $G':=S'\boxtimes P_a'$ contains a subgraph isomorphic to $B_a$, where the isomorphism $\varphi:B_a\to G'$ maps the vertices of $D_{5a}\cup D_{5a+3}$ onto the vertices in $\{v_0\}\times V(P_a')$.  (Note that this makes use of the fact that $|D_{5a}\cup D_{5a+3}|\le |D_{5a+2}\cup D_{5a+2}|$, valid for all $a\in\{0,\ldots,2k-5\}$.  Since $P_a'$ uses only $|D_{5a+1}\cup D_{5a+2}|$ vertices of $P$, this immediately implies that $G[\{v_0,v_1\}\times V(P)]$ contains a subgraph isomorphic to $X:=\boxplus_k-\bigcup_{a\ge 0}D_{5a+4}$.\footnote{Recall that $P$ has $n\ge 2k^2/5+2\ge \alpha_0+2$ vertices. The additional two vertices are required for two boundary cases where we can only guarantee that $|D_{\ell+5a+1}\cup D_{\ell+5a+2}|+1\ge |D_{\ell+5a}\cup D_{\ell+5a+3}|$.  These cases can occur when $\ell+5a=-2$ (because $|D_{-1}\cup D_0|=1$ and $|D_{-2}\cup D_{1}|=2$ and when $\ell+5a=2k-3$ (because $|D_{2k-2}\cup D_{2k-1}=1$ and $|D_{2k-3}\cup D_{2k}|=2$).} Furthermore, the vertex disjoint subpaths $P'_a$, $a\in \mathbb{Z}$ can be chosen so that $P'_a$ and $P'_{a+1}$ are always consecutive in $P$.
  
  \begin{figure}[ht]
    \centering
    \includegraphics{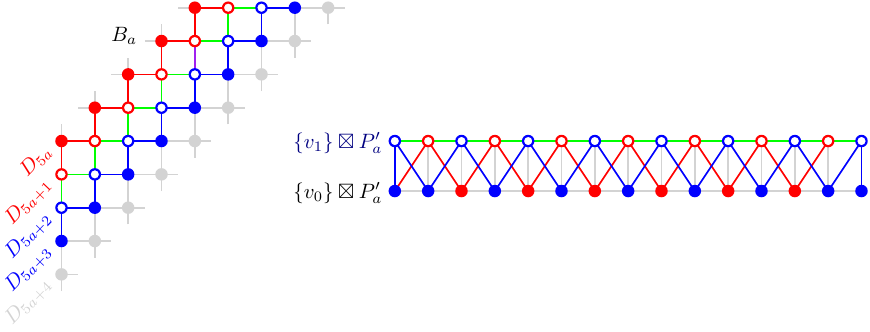}
    \caption{The construction in \cref{star_times_tree_strong}. }
    \label{fivehalves}
  \end{figure}

  By setting $B_x:=\{\varphi(x)\}$ for each $x\in V(X)$ we immediately obtain a model $\{B_x:x\in V(X)\}$ of $X$ in $G$.  We can complete this model of $X$ to a model of $\boxplus_k$ by mapping, for each $a\equiv 4\pmod 5$, the at most $k$ vertices in $D_{5a+4}$ to subsets of $\{v_2,\ldots,v_{2k+1}\}\times P$.  More precisely, let $x_1,\ldots,x_r$ be the vertices in $D_{5a+4}$.  These vertices have neighbours in $D_{5a+3}$ and in $D_{5(a+1)}$.  The vertices in $D_{5a+3}$ have branch sets in $\{v_0\}\times V(P'_a)$.  The vertices in $D_{5(a+1)}$ have branch sets in $\{v_0\}\times V(P'_{a+1})$.  If $a$ is even then we set $B_{x_i}:=\{v_{2i}\}\times (V(P'_a)\cup V(P'_{a+1}))$.  If $a$ is odd then we set $B_{x_i}:=\{v_{2i+1}\}\times (V(P'_a)\cup V(P'_{a+1}))$.  This ensures that $B_{x_i}$ is adjacent to the branch sets of $x_i$'s neighbours in $D_{5a+3}$ and $D_{5(a+1)}$.  Using different subsets of $v_2,\ldots,v_{2k+1}$ for odd and even values of $a$ ensures that the branch sets of vertices in $D_{5a+4}$ are disjoint from those of vertices in $D_{5(a+1)+4}$.  This completes the proof.
\end{proof}

% \worley{would the rootless vertices in $D_3$ at the end of the diagonals only be adjacent to one rooty vertex, the adjacent one from $D_2$?}

% \worley{The diagonals WWgive $10\floor{k/5}^2 + 4\floor{k/5}$ rooty vertices}

To complete this section, we now show that for lexicographic products $S\bigcdot T$, the answer to \eqref{StarTreeQuestion} is $c=3$. By \cref{StarTreeUpperBound}, it suffices to prove the following (where $P_3=S_2$ is the star with two leaves):

\begin{lem}
$\boxplus_k$ is isomorphic to a subgraph of $P_3 \bigcdot P_n$, where $k=\floor{\sqrt{3n-2}}$.
\label{LexST}
\end{lem}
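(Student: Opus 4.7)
The plan is to embed $\boxplus_k$ into $P_3 \cdot P_n$ by partitioning $V(\boxplus_k)$ along its anti-diagonals $D_d := \{(i,j) : i + j = d\}$ for $d \in \{2, \ldots, 2k\}$. The sizes $|D_d|$ form the unimodal symmetric sequence $1, 2, \ldots, k, k-1, \ldots, 2, 1$, summing to $k^2$. Writing $V(P_3) = \{v_0, v_1, v_2\}$ with edges $v_0v_1$ and $v_1v_2$, I identify each ``$P_3$-layer'' $\{v_r\} \times V(P_n)$ with a copy of $P_n$. I will assign each diagonal $D_d$ to a layer $c_d \in \{0, 1, 2\}$ and then place the vertices of $D_d$ inside the $P_n$-copy in that layer. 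Two properties of the assignment are essential: $|c_d - c_{d+1}| \leq 1$ for every $d$, so that grid edges between consecutive diagonals lie inside one layer or cross between $P_3$-adjacent layers; and no three consecutive $c_d$ are equal, so that the subgraph of $\boxplus_k$ induced on each layer is a linear forest.

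The key structural observation is that when $c_d = c_{d+1}$, the bipartite subgraph of $\boxplus_k$ between $D_d$ and $D_{d+1}$ is a single path on $|D_d| + |D_{d+1}|$ vertices. Writing $a_i$ for the vertex of $D_d$ with first coordinate $i$ and $b_i$ for the vertex of $D_{d+1}$ with first coordinate $i$ (for valid indices), each $a_i$ is adjacent in $\boxplus_k$ to exactly those $b_j$ with $j \in \{i, i+1\}$ that exist, and these adjacencies weave $D_d \cup D_{d+1}$ into a single zig-zag path. I will place each such same-layer pair along a contiguous run of $P_n$-positions inside its layer following this zig-zag order, so every one of its grid edges becomes a $P_n$-edge. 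Singleton diagonals in a layer (those with no same-layer neighbor in $d$) contribute no internal grid edges and can occupy any remaining positions in their layer. Grid edges between diagonals in different (hence, by the first condition, $P_3$-adjacent) layers are automatically present in $P_3 \cdot P_n$ via its complete bipartite connections between adjacent $P_3$-layers. Distinct blocks within a layer occupy disjoint position ranges, so the map is injective.

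The final task, and the main obstacle, is to choose the sequence $c_2, c_3, \ldots, c_{2k}$ so that each layer receives at most $n$ vertices. I will take a suitable cyclic shift of the period-$6$ pattern $(0, 0, 1, 2, 2, 1)$, which is a valid walk on $P_3$ satisfying both constraints above and visits each of the three layers exactly twice per period. Using the unimodality and symmetry of the diagonal-size sequence about $d = k+1$, I will verify by a case analysis on $k \bmod 6$ that some shift achieves maximum layer sum at most $\lceil k^2/3 \rceil \leq (k^2+2)/3$, which is at most $n$ since $k = \lfloor \sqrt{3n-2} \rfloor$ implies $k^2 \leq 3n - 2$. The balancing argument is the crux of the proof, as the correct shift must be selected per residue class of $k$ modulo $6$ in order to guarantee the layer-size bound.
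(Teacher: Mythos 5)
Your approach shares the core idea with the paper's: partition $V(\boxplus_k)$ into diagonals (you use anti-diagonals $i+j=d$; the paper uses slope-one diagonals $D_i=\{(x,x+i)\}$, an immaterial difference), assign each diagonal to a $P_3$-layer so that each layer induces a linear forest (or an independent set for the middle layer), pack those forests into the copies of $P_n$, and use the complete bipartite connections between $P_3$-adjacent layers of the lexicographic product to realise all remaining grid edges. Your observation that two consecutive same-layer diagonals induce a single zig-zag path is correct and is exactly the structure the paper exploits when it ``injectively and homomorphically'' maps the two outer linear forests onto copies of $P_n$.

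The gap is in the balancing, which you yourself identify as ``the crux of the proof.'' You choose the period-six pattern $(0,0,1,2,2,1)$ and assert that some cyclic shift makes each layer receive at most $\lceil k^2/3\rceil$ vertices, deferring the justification to ``a case analysis on $k\bmod 6$'' that you never carry out. As written this step is a promise, not an argument; my spot checks suggest the claim is true, but establishing it genuinely requires work, since not every shift succeeds (already at $k=8$ the unshifted pattern gives a layer of size $23>\lceil 64/3\rceil=22$). The paper avoids the whole issue with a sharper assignment that exploits the reflection symmetry of diagonal sizes about the main diagonal: send $D_i$ to the middle layer if $i\equiv 0\pmod 3$, to one end layer if $i\not\equiv 0\pmod 3$ and $i>0$, and to the other end layer if $i\not\equiv 0\pmod 3$ and $i<0$. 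Since $|D_i|=|D_{-i}|$, the two end layers have exactly equal size, and a one-line computation gives $\lfloor k^2/3\rfloor\le |S|,|A|,|B|\le\lceil k^2/3\rceil$ with no shifts or case analysis; the absence of edges between the two end layers is immediate from the sign split. I would replace your periodic pattern and shift search with this sign-based split — your zig-zag-path observation and layer-packing step carry over verbatim, and the balancing becomes automatic.
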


\begin{proof}
Recall that $\boxplus_{k}$ has vertex-set $\{1,\dots,k\}^2$. 
For each $i\in\mathbb{Z}$, let $D_i:=\{(x,x+i):x,x+i\in\{1,\dots,k\}\}$. Each $D_i$ is an independent set in $\boxplus_{k}$ contained in a diagonal line of slope $1$. 
As illustrated in \cref{LexSTfig}, 
let $S:=\bigcup\{D_{i}:i\equiv 0\pmod{3}\}$, 
which is an independent set in $\boxplus_k$. 
Let $A:=\bigcup\{D_{i}:i\not\equiv 0\pmod{3}, i>0\}$ and
 $B:=\bigcup\{D_{i}:i\not\equiv 0\pmod{3}, i<0\}$. 
Each of $A$ and $B$ induce linear forests in $\boxplus_k$.  
Note that $S,A,B$ partitions $V(\boxplus_k)$, and $\floor{k^2/3}\leq|S|,|A|,|B|\leq\ceil{k^2/3}\leq n$. 
Consider the 3-vertex path $P_3=(a,s,b)$. 
Injectively map $S$ to the the copy of $P_n$ corresponding to $s$. Injectively and homomorphically map $A$ to the copy of $P_n$ corresponding to $a$. Injectively and homomorphically map $B$ to the copy of $P_n$ corresponding to $b$. This defines an injection from $V(\boxplus_k)$ to $V(P_3\bigcdot P_n)$. Since there is no edge of $\boxplus_k$ between $A$ and $B$, each edge of $\boxplus_k$ is mapped to an edge of $P_3\bigcdot P_n$, and $\boxplus_k$ is isomorphic to a subgraph of $P_3\bigcdot P_n$.
\end{proof}

\begin{figure}[ht]
\centering
\includegraphics[scale=0.7]{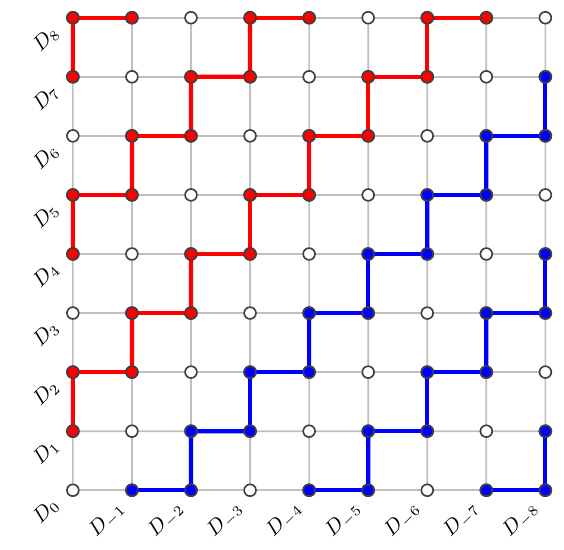}
\caption{The proof of \cref{LexST} with $k=9$: vertices in $S$ are white, red paths are in $A$, and blue paths are in $B$.}
\label{LexSTfig}
\end{figure}

\section{Open Problems}\label{E}

%\pat{I suggest we remove the following paragraph:} \david{I agree.}  \cref{quadratic_grid_minor} shows the existence of very simple graph products that do not have the linear (or even subquadratic) grid minor property.  Thus, the SQGM Framework for optimization problems on graphs with the subquadratic grid minor property does not immediately apply \david{The SQGM Framework needs to be explained before we say it does not apply.}.  Nevertheless, these graph products are still highly structured, so there may be a general framework for attacking these types of problems on (subgraphs of) graph products.  If so, this will require new ideas: The prototypical problem for the SQGM Framework is \textsc{FeedbackVertexSet} which asks for the smallest $S\subseteq V(G)$ such that $G-S$ is a forest. A critical property of the SQGM framework is that the size of an optimal solution for a graph of treewidth $k$ should be $\Omega(k^{1+\delta})$ for some $\delta>0$.  Unfortunately, $S_n\boxprod P_n$ has treewidth $2n-1$ (or so) but has a feedback vertex set of size $n$ (every copy of the root of $S_n$).

A first open problem is to tighten the bounds for $\gm(S\boxtimes T)$ presented in \cref{StarTreeUpperBound} and \cref{star_times_tree_strong}, which combine to show that $\sqrt{5(n-2)/2} \le \gm(S\boxtimes T) \le \sqrt{3n+1}+1$.
This would fully resolve the discussions resulting from \eqref{StarTreeQuestion}.

%\david{Do we want to add the following open problem (please think about counterexamples first): For any trees $T_1$ and $T_2$, are  $\gm(T_1\boxprod T_2)$, $\gm(T_1\boxtimes T_2)$ and $\gm(T_1\cdot T_2)$ within a constant factor of each other?}

Another area of future work is to further investigate the Planar Graph Product Structure Theorem, which we recall states that for every planar graph $G$, there exists a graph $H$ of bounded treewidth and a path $P$ such that $G \subsetsim H \boxtimes P$. A specific area to investigate is identifying which properties of $G$ can be preserved in $H\boxtimes P$.
%~\cite{DJMMUW20}\david{For simplicity, write ``of $G$ can be preserved in $H\boxtimes P$''. Do we need to cite \cite{DJMMUW20} here?}
Several results of this type are known. For example, in the proof of \citet{DJMMUW20}, $H$ is a minor of $G$, and so $H$ is planar. An impossibility result in this area is the following: Even if $G$ is planar and has maximum-degree $5$, a result of the form $G \subsetsim H\boxtimes P$ cannot guarantee that $H$ has bounded treewidth and bounded degree~\cite{DJMMW24}.

A concrete question that remains open is whether the treewidth of $G$ can be preserved in the product:
Is it true that for every planar graph $G$, there exists a bounded treewidth graph $H$ and a path $P$ such that $G\subsetsim H\boxtimes P$ and $\tw(H\boxtimes P) \in O(\tw(G))$? Note that 
$$\Omega(\min\{|V(H)|,|V(P)|\})\leq  \tw( H\boxtimes P) \leq O(\min\{|V(H)|,|V(P)|\}).$$ 
This upper bound follows from \cref{EasyUpperBound} since both $H$ and $P$ have bounded treewidth. The lower bound follows from \eqref{LowerBoundProduct} since we may assume that $G$, $H$ and $P$ are connected. So this question really asks whether for every planar graph $G$, there exists a bounded treewidth graph $H$ and a path $P$ such that $G\subsetsim H\boxtimes P$ and $\min\{|V(H)|,|V(P)|\} \leq O(\tw(G))$. It is even open whether $\min\{|V(H)|,|V(P)|\} \leq f(\tw(G))$ for some function $f$, or whether $\min\{|V(H)|,|V(P)|\} \leq O(\sqrt{|V(G)|})$ (which would be implied since $\tw(G) \leq O(\sqrt{|V(G)|})$ for every planar graph $G$).

{\fontsize{10pt}{11pt}\selectfont
\bibliographystyle{DavidNatbibStyle}
\bibliography{DavidBibliography,additionalbiblio}}
\end{document}